\newtheorem{thm}{Theorem}
\newtheorem{lem}[thm]{Lemma}
\newtheorem{rem}[thm]{Remark}
\newtheorem{defn}[thm]{Definition}
\newcommand{\R}{{\mathbb R}}
\newcommand{\argmin}{\mathop{\rm argmin}}%
\newcommand{\wto}{\rightharpoonup}
\newcommand{\cL}{{\mathcal L}}
\newcommand{\cO}{{\mathcal O}}
\newcommand{\bE}{{\mathbf E}}
\newcommand{\eps}{\epsilon}
\newcommand{\wrt}{with respect to }
\title{Optimal control of multiscale systems using reduced-order models}
\author{W.~Zhang, J.C.~Latorre, G.A.~Pavliotis, and C.~Hartmann}
\date{\today}
\begin{document}
\maketitle


\begin{abstract}
We study optimal control of diffusions with slow and fast variables and address a question 
raised by practitioners: is it possible to first eliminate the fast variables before solving the 
optimal control problem and then use the optimal control computed from the reduced-order model to
control the original, high-dimensional system? The strategy ``first reduce, then optimize''---rather than ``first 
optimize, then reduce''---is motivated by the fact that solving optimal control problems for high-dimensional
multiscale systems is numerically challenging and often computationally prohibitive. We state sufficient and necessary 
conditions, under which the ``first reduce, then control'' strategy can be employed and discuss when it 
should be avoided. We further give numerical examples that illustrate the ``first reduce, then optmize'' approach and discuss 
possible pitfalls.       
\end{abstract}

\section{Introduction}\label{sec:intro}

Optimal control problems for diffusion processes have attracted a lot of attention in the last decades, both in terms of the development of the theory as well as in terms of concrete applications to problems in the sciences, engineering and finance~\cite{fleming2006,kushner2001}. Stochastic control problems appear in a variety of applications, such as statistics \cite{WangDupuis2004,is_multiscale}, 
financial mathematics \cite{Davis1990,PhamBook}, molecular dynamics \cite{ScWiHa2012,HaSc12} and materials science \cite{Steinbrecher2010,Asplund2011}, to mention just a few. 
A common feature of the models used is that they are high-dimensional and possess several characteristic time scales. For instance, in single molecule alignment experiments, a laser field is used to stabilize the slowly-varying orientation of a molecule in 
solution that is coupled to the fast internal vibrations of the molecule, but ideally the controller would like to base the control protocol only on the relevant slow degree of freedom, i.e.~the orientation of the molecule \cite{Stapelfeldt2004}. 

If the time scales in the system are well separated, it is possible to eliminate the fast degrees of freedom and to derive low-order reduced 
models, using averaging and homogenization techniques~\cite{pavliotis2008}. Homogenization of stochastic control systems has been extensively studied by applied 
analysts using a variety of different mathematical tools, including viscosity solutions of the Hamilton-Jacobi-Bellman equation~\cite{Blankenship1987,Evans1989,Alvarez2002,Lions2003}, backward stochastic differential equations \cite{Buckdahn1998, Buckdahn1999,Ichihara2005}, Gamma convergence \cite{Gamma1987,Gamma1989} and  occupation measures \cite{Kushner1986,kushner1990,Kurtz2001}.  The latter has been also employed to analyse deterministic control systems, together with differential inclusion techniques \cite{Gaitsgory1992,Vigodner1997,Grammel1997,Artstein2002,Watbled2005}. The convergence analysis of multiscale control systems, both deterministic and stochastic, is quite involved and non-constructive, in that the limiting equations of motion are not 
given in explicit or closed form; see \cite{Kokotovic1984,gajic2001,Kabanov2003} for notable exceptions, dealing mainly with the case when the dynamics is linear. We shall refer to all these approaches---without trying to be exhaustive---as ``first optimize, then reduce''. 

On the other side of the spectrum are model order reduction (MOR) techniques for large-scale linear and bilinear control systems that 
are based on tools from linear algebra and rational approximation. MOR aims at approximating the response of a controlled system to 
any given control input from a certain class, e.g., piecewise constant or square integrable functions; see, e.g., \cite{gugercin2004,antoulas2005} and the 
references given there. A very popular MOR method is balanced truncation that gives easily computable error bounds  
in terms of the Hankel norm of the corresponding transfer functions \cite{moore1981,glover1984}, and which has recently been extended to deterministic and 
stochastic slow-fast systems, using averaging and homogenization techniques \cite{balance_hartmann,Ha11,HaSBZu13}. 
In applications MOR is often used to drastically reduce the system dimension, before a possibly computational expensive optimal control problem is solved. In most real-world 
applications, solving an optimal control problems on the basis of the unreduced large-scale model is prohibitive, which explains the popularity
of MOR techniques. We will call this approach ``first reduce, then optimize''.

\subsection{The MOR approach: first reduce, then optimize}

In this paper we focus on optimal control of diffusions with two characteristic time scales. As a representative example, we consider the diffusion of a driven Brownian particle in a two-scale energy landscape in one dimension 
\begin{equation}\label{bm}
dx^{\eps}_{s} = (\sigma u^\eps_{s} -\nabla \Phi (x^{\eps}_{s},x^{\eps}_{s}/\eps))\,ds + \sigma \beta^{-1/2} dw_{s}\,,
\end{equation}
where $u^\eps$ is any time-dependent driving force (or control variable) and $w_t$ is standard one-dimensional Brownian motion. The potential consists of a large metastable part with small-scale superimposed periodic fluctuations, $\Phi(x,y) = \Phi_{0}(x) +p(y)$ with $p(\cdot)$ a 1-periodic function. A typical potential is shown in Figure \ref{fig:multiscalepotential}. 
\begin{figure}
  \includegraphics[width=80mm]{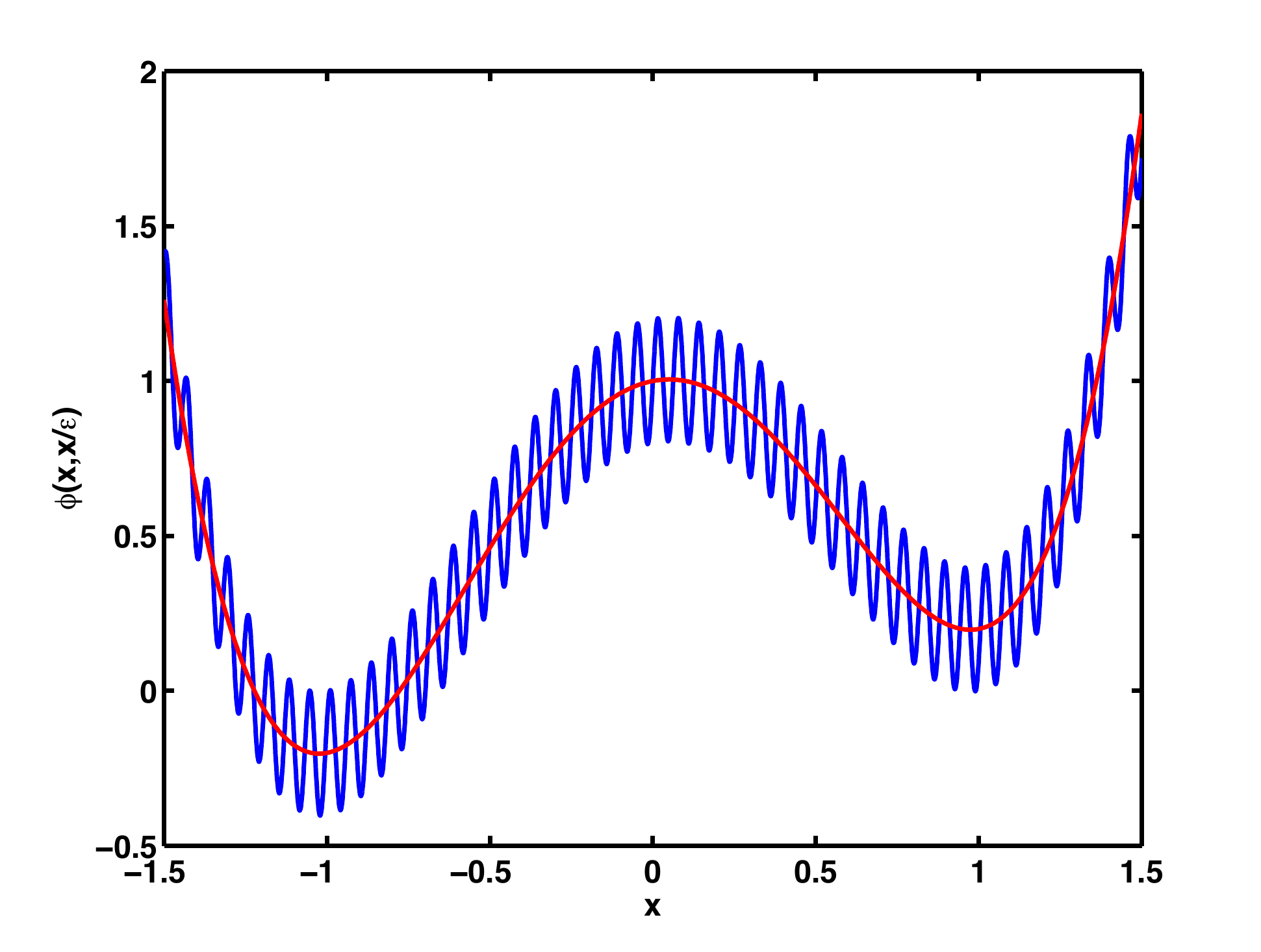}
 \caption{Bistable potential (shown in red) with superimposed small-scale oscillations of period $\eps$ (in blue).} \label{fig:multiscalepotential}
 \end{figure}

Now, if $u^\eps$ is given as a function of time, say bounded and continuous, it is known that $x_{s}^{\eps}$ converges in distribution to a limiting process $x_{s}$ as $\eps\to 0$, where $x_{s}$ solves the homogenized equation~\cite{PavlSt06}
\begin{equation}\label{bmlim}
dx_{s} = (\sigma A u_{s} -A\nabla \Phi_{0}(x_{s}))\,ds + \sqrt{A} \beta^{-1/2} \, dw_{s}\,.
\end{equation}
Here $0<A<1$ is an effective diffusivity that accounts for the slowing down of
the dynamics due to the presence of local minima in the two-scale potential. The property that $x^{\eps}$ weakly converges to $x$ in the sense of probability measures will be referred to as \emph{forward stability} of the homogenized equation. 
Now imagine a situation, in which $u^\eps$ depends on $x_{s}^{\eps}$ via a feedback law
\begin{equation}\label{feedback}
u^{\eps}_{s} = c(x^{\eps}_{s};\eps),
\end{equation}
where $c(\cdot;\eps)$ is a measurable function of $x$. (For simplicity, we do not consider the case that $c$ carries an explicit time-dependence.) Specifically, we choose $u$ from an admissible class of feedback controls so that the cost functional 
\begin{equation*}
J(u^{\eps}) = \bE\left(\int_{0}^{\tau}L(x^{\eps}_{s}, u^\eps_{s})\,ds \right)
\end{equation*}
is minimized for some given running cost $L\ge 0$ associated with the sample paths of $x_{s}^{\eps}$ and $u^{\eps}_{s}$ up to a random stopping time $\tau$ of the process. 

The aim of the paper is to study situations where the cost functional evaluated at $u^{\eps}$, converges to $ J(u)$, with $u$ being the limit of $u^{\eps}$ (in some appropriate sense). Specifically, we are dealing with the situation that 
\[
\inf_{u}J(u^{\eps}) \to \inf_{u} J(u)\,,
\] 
a property that we will refer to as \emph{backward stability}. If the homogenized equation is backward stable, it does not matter whether one first solves the optimal control problem and then sends $\eps$ to $0$ or vice versa, in which case the control $u$ is simply treated as a parameter. One of the implications then is that we can compute optimal controls from the homogenized model, such as (\ref{bmlim}), and use them in the original equation when $\eps$ is sufficiently small. 

Unfortunately very few systems are backward stable in this sense, a notable exception being a system of the form~\eqref{bm} when the running cost $L$ is quadratic in $u$, e.g. \cite[Sec.~4.1]{kushner1990}. The reader may wonder why one should first reduce the equations {\bf before} solving the optimal control problem anyway, rather than the other way round. One answer is that solving optimal control problems for high-dimensional multiscale systems is usually  computationally infeasible, which often leaves no other choice; another answer is that there may be situations, in which a fully resolved model may not be explicitly available, but one only has a sufficiently accurate low-order model that captures the relevant  dynamics of the system. In both cases one wants to make sure that the controls obtained from the low-order reduced model can be used in order to control the original system.   

\subsection{Mathematical justification of the MOR approach}

In this article we consider the exceptional cases of backward stability and give necessary and sufficient conditions under which the reduced systems (disregarding the control) are indeed backward stable. It turns out that a class of optimal control problems that are backward stable are systems that are linear-quadratic in the control variable; they may be nonlinear in the state variables, though, and therefore cover many relevant applications in the sciences and engineering. Moreover we find that an additional requirement is that the controls of the multiscale system converge in a strong sense; an example of weak convergence, in which the systems fails to be backward stable due to lack of sequence continuity, is when the controls are oscillatory with rate $1/\eps$ around its homogenization limit, in case of which $J^{\eps}(u^{\eps})$ does not converge to $J(u)$ unless $J$ is linear in $u$. For a related discussion of weak convergence issues in optimal control, we refer to \cite{AlvarezBardi2007,alvarez2008}. 
Similar problems for parameter estimation and filtering are discussed in \cite{gajic2001,PavlSt06, Sowers-al-2010, Imkeller2012, PapPavSt08}.

Strong convergence of the control is a necessary, but not sufficient condition for backward stability of the model reduction approach (first reduce, then optimize), in which the control variable is treated as a parameter during the homogenization procedure. The class of control problems, which can be homogenized in the above way are systems of SDEs that can be transformed to systems in which the controls are absent. The class of such systems are linear-quadratic in the controls (but possibly nonlinear in the states), and can be transformed by a suitable \emph{logarithmic transformation} of the value function of the optimal control problem: 
\[
  V^{\eps}(x) = \inf_{u^\eps}\bE\left(\int_{0}^{\tau}L(x^{\eps}_{s}, u^\eps_{s})\,ds \middle| x^{\eps}_{0} = x\right)\,.
\]
It can be shown (see \cite{fleming2006}) that the log-transformed value function solves a linear boundary
value problem that does not involve any control variables and can be
homogenized using standard techniques. Once the linear equation has been
homogenized, it can be transformed back to an equivalent optimal control
problem that is precisely the limiting equation of the original multiscale
control problem. A nice feature of the logarithmic transformation approach is that the optimal control can be expressed in terms of the solution of the linear boundary value problem, which can be solved efficiently using Monte-Carlo methods. This approach is helpful when the dynamics are high-dimensional, in which case any grid-based discretization of the above linear boundary value problem is prohibitive. (The case when the stopping time $\tau$ is deterministic and the log-transformed value function solves a linear transport PDE can be treated analogously.)

Our approach is summarized in Table \ref{table:approach}.

\begin{table}[!ht] 
\centering
 \begin{tabular}[h]{ccc}
\vspace{0.5cm}$V^\eps=\min_{u} J^\eps(u)$ & $\underrightarrow{\;\psi^\eps=\exp(-\beta V^\eps)\;}$ &  linear PDE for $\psi^{\eps}$\\
\vspace{0.5cm}$\eps\to 0 \Bigg\downarrow$ & & $\Bigg\downarrow\eps\to 0$\\
$V=\min_{u} J(u)$ & $\underleftarrow{\,\;V=-\beta^{-1}\log\psi \;\,}$ & linear PDE for $\psi$
\end{tabular}\vspace{1cm}

\caption{Schematic approach of the homogenization procedure using logarithmic transformation.} \label{table:approach}
\end{table}

The article is organized as follows: In Section \ref{sec:mms} the model reduction approach for the indefinite time-horizon control problem with multiple time scales is outlined, with a brief introduction to dynamic programming and logarithmic transformations in Section \ref{ssec:logtrafo}. The model reduction problem is illustrated in Section \ref{sec-examples} with three different numerical examples: underdamped motion of Langevin-type (Sec.~\ref{subsec-examples-langevin}), diffusion in a highly-oscillatory potential (Sec.~\ref{subsec-examples-homo}), and the Gaussian linear quadratic regulator (Sec.~\ref{subsec-examples-linear}). The article contains three appendices: Appendix \ref{sec:weak} discusses weak convergence under logarithmic transformations, Appendix \ref{sec:ergode} introduces the infinite time-horizon problem associated with the linear quadratic regulator example, Appendix \ref{sec:bounds} contains the proof of Theorem \ref{thm:entropyIndefinite} and records various identities to bound the cost 
functional and the value function when using suboptimal controls.


\section{Multiscale control problem} \label{sec:mms}

We start by setting the notation which we will use throughout this article. We denote by  
$O\subset\R^{n}$ a bounded open set with sufficiently smooth
boundary $\partial O$. Further let $(z^{\eps,u}_{s})_{s\ge 0}$ be a stochastic process assuming values in $\R^{n}$ that is the solution of     
\begin{equation}\label{sde}
dz^{\eps, u}_{s} = \left( b(z^{\eps, u}_{s};\eps) + \sigma(z^{\eps,
u}_{s};\eps)u^\eps_{s}\right) ds + \sigma(z^{\eps,u}_{s};\eps) \beta^{-1/2} dw_{s}\,,
\end{equation}
where $u^\eps_{s}\in U\subseteq\R^{n}$ is the control applied at time $s$ and
$w=(w_{s})_{s\ge 0}$ is $n$-dimensional Brownian motion and $\beta>0$ is the
(dimensionless) inverse temperature of the system. We assume that, for each
$\eps>0$, drift and noise coefficients, $b(\cdot;\eps)$ and
$\sigma(\cdot;\eps)$, are continuous functions on $\bar{O}$, satisfying the usual
Lipschitz and growth conditions that guarantee existence and uniqueness of the process \cite{oksendal2003}.

\subsection*{Cost functional} We want to control (\ref{sde}) in such a way
that an appropriate cost criterion is minimized where the control is active
until the process leaves the set $O$. Assuming $z_0^{\eps, u} = z\in O$, we define $\tau$ to be the stopping time
\begin{equation}\label{tau}
\tau = \inf\{s>0\,;\,z^{\eps,u}_{s}\notin O\}\,,
\end{equation}
i.e., $\tau$ is the first exit time of the process $z_{s}^{\eps,u}$ from $O$. Our cost criterion reads 
\begin{equation}\label{cost}
  J(u^\eps;z) = \bE\left(\int_0^{\tau} L(z^{\epsilon, u}_s, u^\eps_{s})\,ds
  ~\bigg|~
  z^{\eps, u}_0 = z\right)
\end{equation}
where $L$ is the running cost that we assume to be of the form
\begin{equation}\label{rgcost}
L(z,u) = G(z) + \frac{1}{2}|u|^{2}\,,
\end{equation}
with $G$ being continuous on $\bar{O}$. Note that the $\eps$-dependence of the cost
functional $J$ comes only through the dependence of the control 
on $z^{\eps,u}_{s}$. We will omit the dependence on $z$ in $J(u;z)$ and write it as $J(u)$ whenever there is no ambiguity.  

\subsection{Logarithmic transformation}\label{ssec:logtrafo} 
 
In order to pass to the limit $\eps\to 0$ in (\ref{sde})--(\ref{rgcost}), we resort to the
technique of logarithmic transformations that has been developed by Fleming
and co-workers (see \cite{fleming2006} and the references therein). We start
by recalling the dynamic programming principle for stochastic control problems
of the form (\ref{sde})--(\ref{rgcost}). To this end we make the following
assumptions (see \cite[Secs.~VI.3--5]{fleming2006} for further details on the first two of the following assumptions) :

\subsubsection*{Assumption 1}  For every $\eps>0$, the matrices $a(\cdot;\eps) =\sigma(\cdot;\eps)\sigma(\cdot;\eps)^{T}$ are positive definite with uniformly bounded inverse $a(\cdot;\eps)^{-1}$.

\subsubsection*{Assumption 2}  The running cost $G(z)$ is continuous, 
nonnegative, and $G(z) \le M_1$ for all $z\in\bar{O}$ with bounded first order partial derivatives in $z$. 

\subsubsection*{Assumption 3}  There exist constants $\gamma , C_1 > 0$, which are
independent of $\eps$, such that $ \bE(\exp(\gamma \tau)|z^\eps_0 = z) \le C_1 < +\infty$.\\

\noindent
We define the generator of the dynamics $z_s^{\eps, u}$ by 
\begin{equation*}
\cL^{\eps}(u)\psi = \frac{1}{2\beta} a(z;\eps):\nabla^{2}\psi + \left(\sigma(z;\eps) u^\eps + b(z;\eps)\right)\cdot\nabla\psi \, .
\end{equation*}
Notice that the generator depends on the control $u$. When the control is absent we will use the notation $\cL^\eps := \cL^{\eps}(0)$.
The next result is standard (e.g., see \cite[Sec.~IV.2]{fleming2006})) and stated without proof. 

\begin{thm}\label{thm:verification}
Let $V^{\eps}\in C^{1,2}(O)\cap C(\bar{O})$ be the solution of the Hamilton-Jacobi-Bellman (HJB) equation
\begin{equation}\label{hjb}
\begin{aligned}
0 & = \min_{c\in\R^{n}}\left\{ \cL^{\eps}(c) V^{\eps} + L(z,c) \right\}, \\ 0
  & = V^\epsilon|_{\partial O}\,.
\end{aligned}
\end{equation}
Then
\[
V^{\eps}(z) = \min_{u} J^{\eps}(z;u^\eps)\,,
\]
where the minimum goes over all admissible feedback controls of the form
$u^{\eps}_{s}=c(z^{\eps,u}_{s}, s~;\eps)$. The minimizer is unique and is given by the feedback law 
\begin{equation} \label{eq:optControl} 
 \displaystyle \hat{u}^{\eps} = -\sigma(z;\eps)^{T}\nabla V^{\eps}(z) = \argmin_{c\in\R^{n}}\left\{ \cL^{\eps}(c) V^{\eps} + L(z,c) \right\}.
\end{equation} 
\label{thm-dp}
\end{thm}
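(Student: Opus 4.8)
The plan is to run the classical verification argument for exit-time stochastic control problems, as in \cite[Sec.~IV.2]{fleming2006}. First I would fix an arbitrary admissible feedback control $u^\eps$, let $z^{\eps,u}$ be the corresponding solution of (\ref{sde}) started at $z\in O$, and apply It\^o's formula to $s\mapsto V^\eps(z^{\eps,u}_s)$. Since $V^\eps$ is only guaranteed to be twice differentiable in the open set $O$ and its gradient may blow up near $\partial O$, the correct formulation uses a localizing sequence of stopping times, e.g.\
\[
\tau_n \;=\; \tau \wedge n \wedge \inf\{s>0\,;\,\mathrm{dist}(z^{\eps,u}_s,\partial O)<1/n\}\,,
\]
so that on $[0,\tau_n]$ the integrand $\sigma(\cdot;\eps)^{T}\nabla V^\eps$ of the stochastic integral is bounded and the martingale term has zero mean. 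Taking expectations gives
\[
\bE\big(V^\eps(z^{\eps,u}_{\tau_n})\big)-V^\eps(z)\;=\;\bE\!\left(\int_0^{\tau_n}\cL^\eps(u^\eps_s)V^\eps(z^{\eps,u}_s)\,ds\right).
\]

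Next I would use the HJB equation (\ref{hjb}) pointwise in the form $\cL^\eps(c)V^\eps(z)\ge -L(z,c)$ for all $c\in\R^n$, turning the identity above into $V^\eps(z)\le\bE\!\left(\int_0^{\tau_n}L(z^{\eps,u}_s,u^\eps_s)\,ds\right)+\bE\big(V^\eps(z^{\eps,u}_{\tau_n})\big)$. To pass to the limit $n\to\infty$ I would invoke Assumption~3: the exponential moment bound gives $\bE(\tau)<\infty$, hence $\tau_n\uparrow\tau$ a.s.; monotone convergence (using $L\ge 0$) sends the first term to $J^\eps(u^\eps;z)$, while the boundary term vanishes because $z^{\eps,u}_{\tau_n}\to z^{\eps,u}_\tau\in\partial O$ on $\{\tau<\infty\}$, $V^\eps\in C(\bar{O})$ with $V^\eps|_{\partial O}=0$, and $V^\eps$ is bounded on $\bar{O}$ by Assumption~2, so dominated convergence applies. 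This yields the lower bound $V^\eps(z)\le J^\eps(u^\eps;z)$ for all admissible $u^\eps$. For equality I would specialize to the feedback $\hat c(z)=-\sigma(z;\eps)^{T}\nabla V^\eps(z)$: since $c\mapsto \cL^\eps(c)V^\eps+L(z,c)$ is a strictly convex quadratic in $c$ (the Hessian in $c$ being the identity, thanks to the $\tfrac12|c|^2$ term in $L$), its unique minimizer is exactly $\hat c(z)$, which is (\ref{eq:optControl}), and at $c=\hat c(z)$ the HJB inequality becomes an equality. Repeating the localization argument with equalities along the closed-loop process $z^{\eps,\hat u}$ then gives $J^\eps(\hat u^\eps;z)=V^\eps(z)$, so $\hat u^\eps$ is optimal; uniqueness follows from the same strict convexity, since any other optimal $u^\eps$ must achieve equality in $\cL^\eps(u^\eps_s)V^\eps+L\ge 0$ for $ds\,d\bbP$-a.e.\ $(s,\omega)$ along its trajectory, forcing $u^\eps_s=\hat c(z^{\eps,u}_s)$ a.e.

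The main obstacle I anticipate is not the algebra but the limit $n\to\infty$ in the lower-bound step near $\partial O$: one must simultaneously control the possible blow-up of $\nabla V^\eps$ at the boundary (handled by the localization and the fact that the remaining contribution is only the boundary term) and the boundary contribution $\bE\big(V^\eps(z^{\eps,u}_{\tau_n})\big)$, and this is precisely where the a priori bound on $V^\eps$ (Assumption~2), the exponential exit-time estimate (Assumption~3), and the continuity of $V^\eps$ up to $\bar{O}$ with zero boundary data all enter. A secondary technical point is to check that the closed-loop SDE driven by $\hat u^\eps$ admits a solution whose exit time still satisfies Assumption~3, which I would justify using the standing Lipschitz and growth hypotheses on $b$ and $\sigma$ together with the regularity of $V^\eps$.
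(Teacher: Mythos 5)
The paper states this theorem without proof, describing it as standard and citing Fleming and Soner \cite[Sec.~IV.2]{fleming2006}; your sketch reproduces exactly that standard verification argument (It\^o's formula along a localizing sequence, the pointwise HJB inequality $\cL^\eps(c)V^\eps+L(\cdot,c)\ge 0$, passage to the limit by monotone and dominated convergence, identification of $\hat c(z)=-\sigma(z;\eps)^T\nabla V^\eps(z)$ as the unique minimizer of the strictly convex quadratic, and uniqueness of the optimal control from strict convexity), so it is essentially correct and in line with the cited reference. One small attribution slip worth fixing: the boundedness of $V^\eps$ on $\bar{O}$ that you invoke for dominated convergence follows from $V^\eps\in C(\bar{O})$ with $\bar{O}$ compact (equivalently from the a priori bound $0<C_2\le\psi^\eps\le 1$ derived just after \eqref{phi-exp}, giving $0\le V^\eps\le -\beta^{-1}\log C_2$), not directly from Assumption~2, which only bounds the running cost $G$.
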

The function $V^\epsilon$ is called \emph{value function} or \emph{optimal cost-to-go}. The homogenization problem for (\ref{sde})--(\ref{rgcost}) can be studied using a multiscale expansion of the nonlinear PDE (\ref{hjb}) in terms of the small parameter $\eps$; see, e.g., \cite{bensoussan1988,kushner1990}. In this article we remove the nonlinearity from the equation by means of a logarithmic transformation of the value function. Specifically, let
\[
\psi^{\eps}(z) = e^{-\beta V^{\eps}(z)}\,.
\]  
By chain rule,  
\begin{equation*}
\beta^{-1 }e^{\beta V^{\eps}}\cL^{\eps} e^{-\beta V^{\eps}}= -\cL^{\eps}V^{\eps} + \frac{1}{2}|\sigma^{T}\nabla V^{\eps}|^{2}\,,
\end{equation*}
which, together with the relation    
\begin{equation*}
-\frac{1}{2}|\sigma^{T}\nabla V^{\eps}|^{2} = \min_{c\in\R^{n}}\left\{\sigma
c\cdot\nabla V^\epsilon + \frac{1}{2}|c|^{2}\right\}\,,
\end{equation*}
implies that (\ref{hjb}) is equivalent to the linear boundary value problem 
\begin{equation} \label{linbvp}
\begin{aligned}
\left(\cL^{\eps} - \beta G\right)\psi^{\eps} \,  & = 0 \, ,\\ \psi^\epsilon|_{\partial O} & = 1\, ,
\end{aligned}
\end{equation}
for the function $\psi^{\eps}$. By the Feynman-Kac formula, \eqref{linbvp} has an interpretation as a 
control-free sampling problem (see \cite[Thm.~8.2.1]{oksendal2003}): 
\begin{equation} 
\psi^\epsilon(z) = \bE\left(\exp\left(-\beta \int_{0}^{\tau}
G(z^{\eps}_{s})\,ds\right)~ \bigg|~ z^{\eps}_{0}=z\right),
\label{phi-exp}
\end{equation}
where $z^{\eps}_{s}$ solves the control-free SDE
 \begin{equation*}
dz^{\eps}_{s} = b(z^{\eps}_{s};\eps)\,ds + \sigma(z^{\eps}_{s};\eps) \beta^{-1/2} dw_{s}\,.
\end{equation*}
Equations (\ref{hjb})--(\ref{phi-exp}) express a Legrendre-type duality between the value of 
an optimal control problem and cumulant generating functions \cite{DaiPra1996, fleming2006}: 
\begin{equation}
  V^{\eps} = -\beta^{-1}\log \psi^\epsilon \, .
  \label{dual}
\end{equation}
In other words, 
\begin{align*}
-\beta^{-1}\log\,
& \bE\left(\exp\left(-\beta\int_0^{\tau} G(z^\eps_s) ds\right) ~\bigg|~ z^\epsilon_0 = z\right)\\
&  = \inf\limits_{u_s} \bE\left(\int_0^{\tau} L(z^{\epsilon,u}_s, u_s) ds~\bigg| ~z^{\epsilon,u}_0 = z\right),
\end{align*}
where $z^{\eps,u}_s$ satisfies the controlled SDE (\ref{sde}) and
  $z^{\eps}_{s}=z^{\eps,0}_{s}$.\\

By the above assumptions and the strong
maximum principle for elliptic PDEs it follows that (\ref{linbvp}) has a
classical solution $\psi^{\eps}\in C^{1,2}(O)\cap C(\bar{O})$. Moreover,
combining Assumption 3, (\ref{phi-exp}) and H\"older's inequality, we have that 
\[
  \psi^\eps \ge \bE(\exp(-\beta M_1 \tau) | z^\eps_0 = z)
  \]
and 
\[
\bE(\exp(-\beta M_1 \tau) | z^\eps_0 = z)^{1/p}\bE(\exp(\gamma \tau) | z^\eps_0 = z)^{1/q} \ge 1
\]
where $p = \beta M_1/\gamma + 1$ and $q =\gamma/(\beta M_1) + 1$, and thus 
\[ 
  0< C_2 \le \psi^{\eps}\le 1, \quad\eps>0
\]  
for a constant $C_2 = C_1^{-\beta M_1 / \gamma}$ that is independent of $\eps$.
 
\begin{rem} In the course of the paper we will drop the assumption that the
  operator $\cL^{\eps}$ is uniformly elliptic and instead  require only that
  is hypoelliptic \cite{Malliavin1976}. In this case the matrix
  $\sigma\sigma^{T}$ can be semidefinite, if the vector field $b$ satisfies an
  additional controllability assumption, known as H\"ormander's condition
  \cite{Bismut1981}, which guarantees that the transition probability has a strictly positive  density with respect to Lebesgue measure, in which case (\ref{linbvp}) and (\ref{hjb}) have classical solutions; cf.~\cite[Sec.~IV]{fleming2006}.  
\end{rem}


\subsection{Homogenization problem}\label{ssec:homo}

We now specify the class of multiscale systems considered in this article.  Specifically, we address   
slow-fast systems of the form 
\begin{subequations}\label{sdeXY}
\begin{eqnarray}
  &&  dx^{\eps}_{s} = \left( \frac{1}{\eps}f_0(x^{\eps},y^{\eps})+f_1(x^{\eps},y^{\eps})\right)ds + \beta^{-1/2}\alpha_1(x^{\eps},y^{\eps})  dw^1_{s}, \\
  &&dy^{\eps}_{s} = \left( \frac{1}{\eps^2}g_0(x^{\eps},y^{\eps})+\frac{1}{\eps}g_1(x^{\eps},y^{\eps})\right)ds + \frac{\beta^{-1/2}}{\eps}\alpha_2(x^{\eps},y^{\eps}) dw^2_{s}\,,\label{sdeadjoint}
\end{eqnarray}
\end{subequations}
together with an exponential expectation
\begin{equation}\label{phiXY}
\psi^\epsilon(x, y) = \bE\left(\exp\left(-\beta\int_0^{\tau} G(x^\epsilon_s,
y^\epsilon_s)~ ds\right) ~\bigg|~ x^\epsilon_0 = x, y^\epsilon_0 = y\right).
\end{equation}
Letting $\cL^{\eps}$ denote the infinitesimal generator of (\ref{sdeXY}), it holds that  
\begin{equation}
  \left(\cL^{\eps} - \beta G\right)\psi^\epsilon = 0,
\label{fk-homo}
\end{equation}
where 
\[
\cL^{\eps} = \frac{1}{\epsilon^2} \cL_0 + \frac{1}{\epsilon} \cL_1 + \cL_2\,,
\] 
with 
\begin{align*}
\cL_0 & = g_0 \cdot\nabla_y + \frac{1}{2} \beta^{-1} \alpha_2\alpha_2^T : \nabla_y^2 ,\\
\cL_1 & = f_0 \cdot\nabla_x + g_1\cdot \nabla_y , \\
\cL_2 & = f_1 \cdot\nabla_x + \frac{1}{2} \beta^{-1} \alpha_1\alpha_1^T :
  \nabla_x^2  \,. 
\end{align*}
Let us assume that $\psi^\epsilon$ admits the following perturbation expansion in powers of $\eps$:
\begin{align*}
\psi^\epsilon = \psi_0 + \epsilon\psi_1 + \epsilon^2 \psi_2 + \cdots\,.
\end{align*}
  By substituting the ansatz into (\ref{fk-homo}) 
and comparing different powers of $\epsilon$ we obtain a hierarchy of equations, the first three of which are  
\begin{equation}\label{terms-homo}
\begin{aligned}
\cL_0 \psi_0 & = 0, \\
\cL_0\psi_1 &=  - \cL_1\psi_0, \\
\cL_0\psi_2 & = - \cL_1\psi_1 - \cL_2\psi_0 + \beta G\psi_0\,.
\end{aligned}
\end{equation}
We suppose that for each fixed $x$, the dynamics (\ref{sdeadjoint}) of the fast variables are ergodic, with the unique invariant density $\rho_x(y)$. Then by construction $\rho_{x}$ is the unique solution of the  equation 
$\cL^*_0 \rho_x(y) = 0$, which together with the first equation of (\ref{terms-homo}) implies that $\psi_0$ is independent of $y$. 
In order to proceed, we further assume that $f_0(x,y)$ satisfies the \emph{centering condition}:
\begin{align*}
\int f_0(x,y)\rho_x(y) \,dy = 0\,.
\end{align*}
The centering conditions, together with the strong maximum principle implies that the solution of the cell problem
\begin{equation} \label{eq:cellEquation}
\cL_0\Theta(x,y) = -f_0(x, y)\,, \quad \int \Theta(x,y)\rho_x(y) \,dy = 0
\end{equation}
is unique, with $\psi_1(x,y) = \Theta(x,y)\cdot\nabla_x \psi_0(x)$. 
Multiplying $\rho_x(y)$ on both sides of the third equation in (\ref{terms-homo}) and
integrating with respect to $y$, we obtain
\begin{equation}
  \bar{\cL}\psi_0 - \beta \bar{G}\psi_0 = 0,
\label{fk2}
\end{equation}
where 
\begin{equation}\label{homo-generator1}
\bar{\cL}  = \bar{f}(x)\cdot\nabla_x + \frac{1}{2}\beta^{-1}
\bar{\alpha}\bar{\alpha}^T : \nabla_x^2 ,
\end{equation}
with  
\begin{align}
\begin{split}
\bar{f}(x)  = & \int \left[\nabla_x\Theta(x,y)f_0(x,y) +
\nabla_y\Theta(x,y)g_1(x,y) + f_1(x, y)\right] \rho_x(y) \,dy , \\
\bar{G}(x)  = & \int G(x, y) \rho_x(y) \,dy , \\
\bar{\alpha}(x)\bar{\alpha}(x)^T  = & \int \left[\beta\left(\Theta(x,y)f_0(x,y)^T +
f_0(x,y)\Theta(x,y)^T\right) \right. \\
& + \left. \alpha_1(x,y)\alpha_1(x,y)^T\right] \rho_x(y)\, dy \, .
\end{split}
\label{homo-generator2}
\end{align}

\subsection*{Homogenized control system}
It follows using standard homogenization theory for linear elliptic equations (e.g. \cite{bensoussan1978,pavliotis2008}) that   
for $\epsilon \rightarrow 0$ the solution of (\ref{fk-homo}) converges to the leading term of the asymptotic expansion:  
\begin{equation}
\psi_0(x) = \bE\left(\exp\left(-\beta\int_0^\tau \bar{G}(x_s)\, ds\right) ~\bigg|~ x_0 = x\right) , \label{phi1} 
\end{equation}
where $x_s$ is the solution of the homogenized SDE
\begin{equation}
  d x_s = \bar{f}(x_s) ds + \bar{\alpha}(x_s) \beta^{-1/2} dw_s\, ,
\label{reduced-eqn-2}
\end{equation}
with coefficients as given in (\ref{homo-generator2}).

The corresponding asymptotic expansion of the value function $V^\epsilon$  for $\epsilon\rightarrow 0$ is obained by the logarithmic transformation~\eqref{dual}: 

\begin{align*}
  V^\epsilon =-\beta^{-1}\log (\psi_0 + \epsilon\psi_1 + \cO(\epsilon^{2})) =-\beta^{-1}\log \psi_0 - \beta^{-1}\frac{\psi_1}{\psi_0}\epsilon + \cO(\epsilon^{2}).
\end{align*}
Therefore, using the ansatz $V^\epsilon = V_0 + \epsilon V_1 + \epsilon^2 V_2
+\cdots$ it follows that 
\[
  V_0 = -\beta^{-1}\log \psi_0, \quad V_1 = -\beta^{-1}\frac{\psi_1}{\psi_0}.
\]
Using the log-transformation property of the cumulant generating function (p.~\pageref{dual}), we conclude that $V_0$ is
the value function of the optimal control problem
\begin{align*} 
V_{0}(x) = \inf_{u} \bE\left(\int_0^{\tau} \left[\bar{G}(x^{u}_s) + \frac{1}{2} |u_s|^2\right] ds ~\bigg|~x^u_0 = x \right),
\end{align*}
where the minimization is subject to the homogenized dynamics 
\begin{align}   \label{reduced-eqn-2-controlled}
  d x^u_s = (\bar{f}(x^u_s) + \bar{\alpha}(x^u_s)u_s) ds +
  \bar{\alpha}(x^u_s) \beta^{-1/2} dw_s\,.
\end{align}
According to \eqref{eq:optControl}, the optimal feedback law for the homogenized problem reads 
\begin{equation} \label{eq:optContHom}
  \hat{u}_{t}=-\bar{\alpha}(x_{t}^{u})^T\nabla V_0(x^{u}_{t}).
\end{equation}

\subsection{Control of the full dynamics using reduced models}
\label{sec:reducedModels}
Our goal is to find the optimal control policy $\hat{u}^\epsilon =
(\hat{u}^{1,\epsilon}, \hat{u}^{2,\epsilon})$ for the fast/slow system~\eqref{sdeXY} for $\eps \ll 1$. Using Theorem~\ref{thm-dp} and the asymptotic expansion of $V^\epsilon$, we have
\begin{align}
  \begin{split}
    &\hat{u}^{1,\epsilon} = -\alpha_1^T\nabla_x V^\epsilon = -\alpha_1^T\nabla_x V_0 +
\cO(\epsilon), \\
&\hat{u}^{2,\epsilon} =
-\frac{1}{\epsilon} \alpha_2^T\nabla_y V^\epsilon
= -\alpha_2^T\nabla_y V_1 + \cO(\epsilon) = -\alpha_2^T\nabla_y \Theta \nabla_x
V_0 + \cO(\epsilon).
\end{split}
\label{opt-control-1}
\end{align}

Notice that the leading terms in (\ref{opt-control-1}) 
are related to the value function of optimal control
problem for the reduced SDE. This indicates that we may design the control
policy from the reduced problem and use it to control the original
multiscale equation. This assertion is justified by the following result for
the general optimal control problem \eqref{sde}--\eqref{rgcost}.

\begin{thm}
Let Assumptions 1,2 and 3 hold and, furthermore, suppose that $\eps < (\gamma/\beta)^{1/2}$ and
  $|u_{t}-\hat{u}_{t}| \le \epsilon$ uniformly in $t$. Then we have
\begin{equation}
  \left|J(u) - J(\hat{u}^\eps)\right| \le C \eps^2.
\end{equation}
\label{thm:entropyIndefinite}
\end{thm}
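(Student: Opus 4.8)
The plan is to reduce the statement to (i) an exact formula for the suboptimality gap and (ii) a bound on the mean exit time of the controlled dynamics that is uniform in $\eps$. Throughout I write $\hat u^\eps=-\sigma(\cdot;\eps)^{T}\nabla V^\eps$ for the optimal feedback of \eqref{sde}--\eqref{rgcost}, so that $J(\hat u^\eps)=V^\eps$ by Theorem~\ref{thm-dp}, and I read the hypothesis as $|u_s-\hat u^\eps(z^{\eps,u}_s)|\le\eps$ uniformly along the trajectory driven by $u$ (i.e.\ $u$ stays $\eps$-close to the optimal feedback law).

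First I would establish the identity
\[
  J(u)-J(\hat u^\eps)\;=\;\tfrac12\,\bE\!\left(\int_0^{\tau}\bigl|u_s-\hat u^\eps(z^{\eps,u}_s)\bigr|^{2}\,ds~\bigg|~z^{\eps,u}_0=z\right),
\]
whose right-hand side is manifestly nonnegative (consistent with optimality of $\hat u^\eps$), so that $|J(u)-J(\hat u^\eps)|=J(u)-V^\eps(z)$. I would obtain it by applying Dynkin's formula to $V^\eps(z^{\eps,u}_s)$ on $[0,\tau]$, using $V^\eps|_{\partial O}=0$, and then completing the square: since $L$ is quadratic in the control, the HJB equation \eqref{hjb} gives $L(z,c)+\cL^{\eps}(c)V^{\eps}(z)=\tfrac12|c-\hat u^\eps(z)|^{2}$ for every $c\in\R^{n}$. (Equivalently, $w:=V^{\eps,u}-V^\eps$, with $V^{\eps,u}$ the cost of the fixed feedback $u$, solves the linear Dirichlet problem $\cL^{\eps}(u)w=-\tfrac12|u-\hat u^\eps|^2$, $w|_{\partial O}=0$, and the identity follows by Feynman--Kac; or one reads it off from the Gibbs variational principle underlying the duality \eqref{dual}.)

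The second, and more delicate, step is to bound $\bE(\tau\mid z^{\eps,u}_0=z)$ uniformly in $\eps$; this is where $\eps<(\gamma/\beta)^{1/2}$ is used. I would first note that the \emph{optimally} controlled process $z^{\eps,\hat u^\eps}$ is the Doob $\psi^\eps$-transform of the killed generator $\cL^{\eps}-\beta G$ — this is essentially the computation behind the passage from \eqref{hjb} to \eqref{linbvp}, namely $\cL^{\eps}(\hat u^\eps)f=(\psi^\eps)^{-1}(\cL^{\eps}-\beta G)(\psi^\eps f)$. Since $\psi^\eps\equiv1$ on $\partial O$, the associated change of measure on $\mathcal F_\tau$ has density $\psi^\eps(z)^{-1}\exp(-\beta\int_0^\tau G)$, so using $G\ge0$, Assumption~3 and the $\eps$-independent bound $\psi^\eps\ge C_2$,
\[
  \bE\!\left(e^{\gamma\tau}~\big|~z^{\eps,\hat u^\eps}_0=z\right)\;=\;\frac{1}{\psi^\eps(z)}\,\bE\!\left(e^{\gamma\tau}e^{-\beta\int_0^\tau G(z^\eps_s)\,ds}~\big|~z^\eps_0=z\right)\;\le\;\frac{C_1}{C_2}\,.
\]
Next I would invoke the Gibbs (Donsker--Varadhan) inequality $\log\bE_\mu e^{F}\ge\bE_\nu F-D(\nu\|\mu)$ with $F=\gamma\tau$, $\mu$ the law of $z^{\eps,\hat u^\eps}$ and $\nu$ the law of $z^{\eps,u}$ (both started at $z$); since these two laws are drift perturbations of the same diffusion, $D(\nu\|\mu)=\tfrac{\beta}{2}\bE(\int_0^\tau|u_s-\hat u^\eps(z^{\eps,u}_s)|^2\,ds\mid z^{\eps,u}_0=z)\le\tfrac{\beta\eps^2}{2}\,\bE(\tau\mid z^{\eps,u}_0=z)$ by hypothesis. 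Combining the last two displays gives
\[
  \Bigl(\gamma-\tfrac{\beta\eps^2}{2}\Bigr)\,\bE\!\left(\tau~\big|~z^{\eps,u}_0=z\right)\;\le\;\log\frac{C_1}{C_2}\,,
\]
and since $\eps<(\gamma/\beta)^{1/2}$ forces $\gamma-\tfrac{\beta\eps^2}{2}>\gamma/2$, this yields $\bE(\tau\mid z^{\eps,u}_0=z)\le2\gamma^{-1}\log(C_1/C_2)$, uniformly in $\eps$. Inserting this bound into the identity of the first step gives $|J(u)-J(\hat u^\eps)|\le\tfrac{\eps^2}{2}\bE(\tau\mid z^{\eps,u}_0=z)\le\gamma^{-1}\log(C_1/C_2)\,\eps^2$, i.e.\ the claim with $C=\gamma^{-1}\log(C_1/C_2)$.

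The principal obstacle is not any isolated estimate but the uniformity in $\eps$: one must justify a priori that $\bE(\tau\mid z^{\eps,u}_0=z)<\infty$, so that Dynkin's formula and the Gibbs inequality are legitimate up to the random time $\tau$ (e.g.\ from Assumptions~1--2 and boundedness of admissible controls, or by first stopping at $\tau\wedge n$ and letting $n\to\infty$), and one must check that $C_1,C_2$ do not degenerate as $\eps\to0$, which is exactly why Assumption~3 and the $\eps$-independent bounds $0<C_2\le\psi^\eps\le1$ are needed; the threshold $\eps<(\gamma/\beta)^{1/2}$ is precisely what keeps the entropic cost of the control perturbation strictly below the exponential margin $\gamma$. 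Everything else is routine bookkeeping.
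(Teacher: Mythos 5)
Your argument is correct and is, at its core, the paper's own argument: the paper likewise reformulates the suboptimality gap exactly as $\beta^{-1}I(\mu_u\,|\,\hat\mu)=\tfrac12\bE_{\mu_u}\!\int_0^\tau|u_s-\hat u_s|^2ds$, bounds $\bE_{\hat\mu}(e^{\gamma\tau})\le C_1^{1+\beta M_1/\gamma}=C_1/C_2$ via the same Doob/Girsanov change of measure, and then uses the Jensen/Gibbs inequality $\log\bE_{\hat\mu}e^{\gamma\tau}\ge\gamma\bE_{\mu_u}\tau-I(\mu_u|\hat\mu)$ together with $\eps<(\gamma/\beta)^{1/2}$ to close the estimate, yielding the same constant $C=\gamma^{-1}(1+\beta M_1/\gamma)\log C_1$. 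The only cosmetic difference is that you obtain the gap identity by Dynkin's formula and completing the square in the HJB equation, whereas the paper writes it directly via the Girsanov density and relative entropy, but these are two derivations of the same identity.
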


The proof of the theorem can be found in Appendix~\ref{sec-cost_function}. \\

Upon combining the above theorem with the formula for the optimal control policy in~\eqref{opt-control-1} we conclude that when the two time scales in the system are well separated, $\epsilon \ll 1$, the optimal control policy is well approximated by the leading order terms in~\eqref{opt-control-1} and results in a cost value that is nearly optimal.

\begin{rem}
All considerations in this paper readily generalize to the averaging problem,
i.e. when $f_{0}=g_{1}=0$ in (\ref{sdeXY}). This is not surprising since for
averaging problems strong convergence $\psi^{\eps}\to\psi$ is expected to hold (when the diffusion
coefficient $\alpha_1$ in~\eqref{sdeXY} is independent of the fast variable
$y$). Related problems have been addressed in \cite{PapPavSt08}, in which the authors study parameter estimation 
and convergence of the maximum likelihood function under averaging and homogenization.
\end{rem}


\section{Three prototypical applications}
\label{sec-examples}
In this section we apply the results presented in the previous section to three typical
multiscale models. For each model we first state the optimal
control problem along with its log-transformed counterpart, then we study the asymptotic
limits of the value function and of the optimal control policy and give explicit formulae for the solution. 
The first two examples are taken from \cite{PapPavSt08}, while the third is adapted from~\cite{gugercin2004}.

\subsection{Overdamped Langevin equation}
\label{subsec-examples-langevin}
We consider the second-order Langevin equation
\begin{equation}
\epsilon^2 \frac{d^2 x^\epsilon}{ds^2} = - \frac{dx^\epsilon}{ds} -\nabla \Phi(x^\epsilon) +
\sqrt{2} \beta^{-1/2} \frac{dw}{ds}, \label{langevin}
\end{equation}
where $\epsilon \ll 1,\, x \in \mathbb{R}^n,\, \beta > 0$, and $\Phi$ being a smooth the potential energy function. Introducing the
auxiliary variable $y^\epsilon$ we can recast (\ref{langevin}) as 
\begin{subequations}
\begin{eqnarray}
&&\frac{dx^{\epsilon}}{ds} = \frac{1}{\epsilon} y^{\epsilon}, \\
&&d y^{\epsilon} = -\left(\frac{1}{\epsilon}\nabla \Phi(x^{\epsilon}) + \frac{1}{\epsilon^2}y^{\epsilon}\right)dt 
+  \frac{1}{\epsilon} \sqrt{2} \beta^{-1/2} dw\,.
\end{eqnarray}
\label{langevin-2var}
\end{subequations}
We consider the solution of the optimal control problem 
\begin{align} \label{value-fun-indefinite}
V^\epsilon(x, y) &= \inf_{u^\eps} \bE\left(\int_0^{\tau} \left[G(x^{\epsilon, u}_s) + \frac{1}{2} |u^\eps_s|^2\right] ds~\bigg|~x^{\epsilon, u}_0 = x, y^{\epsilon, u}_0 = y\right)
\end{align}
under the controlled Langevin dynamics 
\begin{subequations}
\begin{eqnarray}
&&\frac{dx^{\epsilon,u}_s}{ds} = \frac{1}{\epsilon} y^{\epsilon,u}_s, \\
&&d y^{\epsilon,u}_s = \left( \frac{1}{\epsilon} \sqrt{2}u^\eps_s - \frac{1}{\epsilon}\nabla \Phi(x^{\epsilon,u}_s)
- \frac{1}{\epsilon^2}y^{\epsilon,u}_s\right)dt  +
 \frac{1}{\epsilon} \sqrt{2}  \beta^{-1/2} dw.
\end{eqnarray}
\label{ex1-control-eqn}
\end{subequations}
We notice that~\eqref{langevin-2var} is somewhat
different to the form specified in Section \ref{sec:mms}, since there is no noise and hence no control
term in the equation for $x^\epsilon$. The infinitesimal generator correpsonding to (\ref{langevin-2var}) is 
hypoelliptic (rather than elliptic). Yet the standard homogenization arguments apply, for here the fast variable
is $y$ and the noise is acting uniformly in $y$. As a consequence the generator of the fast dynamics is uniformly 
elliptic, ans hence the standard theory applies. 
Let
  \[
  \psi^\epsilon(x,y) = \bE\left(\exp\left(-\beta\int_0^{\tau} G(x^\epsilon_s) ~ds\right) ~\bigg|~ x^\epsilon_0 = x, y^\epsilon_0 = y\right).
  \]
Assuming that the linear boundary value problem (\ref{linbvp}) associated with $\psi^{\eps}$ has a classical solution, then the dual relation 
$V^\epsilon = -\beta^{-1}\log \psi^\epsilon$ holds and the results of the
previous section carries over without alternations.

\subsection*{Homogenized control system}

From the above and the considerations from the previous section we can conclude that the leading term of $V^\epsilon(x,y)$ 
satisfies the optimal control problem of the homogenized SDE, which is 
\begin{equation}\label{v0-langevin}
V_{0}(x) = \inf_{u} \bE\left(\int_0^{\tau} \left[G(x^u_s) + \frac{1}{2}
|u_s|^2\right] ds ~\bigg|~x^u_0 = x\right)
\end{equation}
subject to the homogenized equation
\begin{equation}\label{e:overdamped}
dx^u_s = -\nabla \Phi(x^u_s) ds+ \sqrt{2} u_s ds +\sqrt{2} \beta^{-1/2} dw_s.
\end{equation}
Equation (\ref{e:overdamped}) is called the \emph{overdamped Langevin equation} that is obtained from (\ref{langevin}) 
by letting the inertial second-order term tend to zero \cite{nelson1967}.

We now derive an explicit asymptotic expression for the optimal feedback law $\hat{u}_{t}^{\eps}:=\hat{u}^{2,\eps}_{t}$, with $\hat{u}_{t}^{\eps} = \hat{c}^{\eps}(x^{\eps,u}_{t},y^{\eps,u}_{t})$ and 
\[
\hat{c}^\epsilon = -\sqrt{2}\epsilon^{-1} \nabla_y V^\epsilon(x,y)\,.
\]
From (\ref{ex1-control-eqn}) and the
expansion $\psi^\epsilon(x,y) = \psi_0(x) + \epsilon \psi_1(x,y) +
\cO(\eps)\epsilon)$ we find 
\begin{equation}\label{ex1-u-1}
\begin{aligned}
  \hat{c}^\epsilon  & = -\sqrt{2}\nabla_y V_1 + \cO(\eps)
  =-\sqrt{2}\nabla_y\Theta\nabla_x V_0 + \cO(\eps).
\end{aligned}
\end{equation}
As before $\Theta$ is the solution to the associated cell problem. To solve it we notice that the infinitesimal generator 
of (\ref{langevin-2var}) has the form
\begin{equation*}
\cL = \frac{1}{\epsilon^2}\cL_0 + \frac{1}{\epsilon}\cL_1
\end{equation*}
with 
\begin{align}
\cL_{0} & = -y\cdot \nabla_y + \beta^{-1} \Delta_y\\
\cL_{1} & = y\cdot \nabla_x - \nabla \Phi\cdot \nabla_y\,,
\end{align}
which implies that the cell problem for $\Theta$ reads  
\[
\cL_0\Theta  = -y\, ,
\] 
with unique solution $\Theta(x,y) = y$. Combining it with (\ref{ex1-u-1}), we obtain the sought asymptotic expression for the optimal feedback law: 
\begin{equation}
  \hat{c}^\epsilon =
-\sqrt{2}\nabla _xV_0 + \cO(\eps)\,,
\label{ex1-u}
\end{equation}
with $V_{0}$ as given in (\ref{v0-langevin}). We therefore conclude that the optimal control 
$\hat{u}^\epsilon$ for the Langevin equation \eqref{langevin} converges to the optimal control 
of the overdamped equation~\eqref{e:overdamped} as $\epsilon \rightarrow 0$. Moreover,  
Theorem~\ref{thm:entropyIndefinite} guarantees that the control value is asymptotically exact if we 
replace $\hat{u}^\epsilon$ with the control $\hat{u}=-\sqrt{2}\nabla_x V_0$ in
the multiscale dynamics (\ref{ex1-control-eqn}). Hence the overdamped equation is backward stable.

\subsection*{Langevin dynamics in a double-well potential}

As an example consider the case $n=1$, with running cost $G(x) = 1$ in (\ref{value-fun-indefinite}) and   
random stopping time   
\[
\tau= \inf\{s > 0 : x^{\epsilon,u}_s > 2\}\,.
\] 
The dynamics are governed by the double-well potential 
\[
\Phi(x) = \frac{1}{4}(x^2-1)^2
\]
depicted in Figure \ref{fig-ex1A}. As the homogenized problem is one-dimensional, 
the leading term $V_0$ of the value function $V^\epsilon$ can be computed by solving a 
two-point boundary value problem. The resulting leading term (\ref{ex1-u}) for the optimal control 
\[\hat{u}^\epsilon_{t}=\hat{c}^{\eps}(x^{\eps,u}_{t})
\] 
is shown in Figure \ref{fig-ex1C}. We then computed
the cost function $J^{\eps} = J(\hat{u}^{\eps})$ starting from
three different initial points $x_0 = 1.0, 1.2, 1.5$, using the approximation 
\[
\hat{u}^\epsilon_{t}\approx -\sqrt{2}\nabla _xV_0(x^{\eps,u}_{t})\,.
\] 
Figure \ref{fig-ex1D} clearly shows that $J^\epsilon$ approaches its infimum $V_0(x_0)$ as
$\epsilon\rightarrow 0$. A clear advantage of controlling the full dynamics using the optimal
control obtained from the reduced model here is that the infinitesimal generator $\cL^{\eps}$ 
of the original Langevin dynamics is not self-adjoint, whereas the infinitesimal
generator $\bar{\cL}$ of the reduced dynamics is essentially self-adjoint. 
That is, not only do we benefit from a lower dimensionality of the reduced-order model (by a factor of 2), 
but we also avoid solving a boundary value problem with a non-selfadjoint operator.

\begin{figure}
\centering
\begin{subfloat}[]{
\includegraphics[width=5cm]{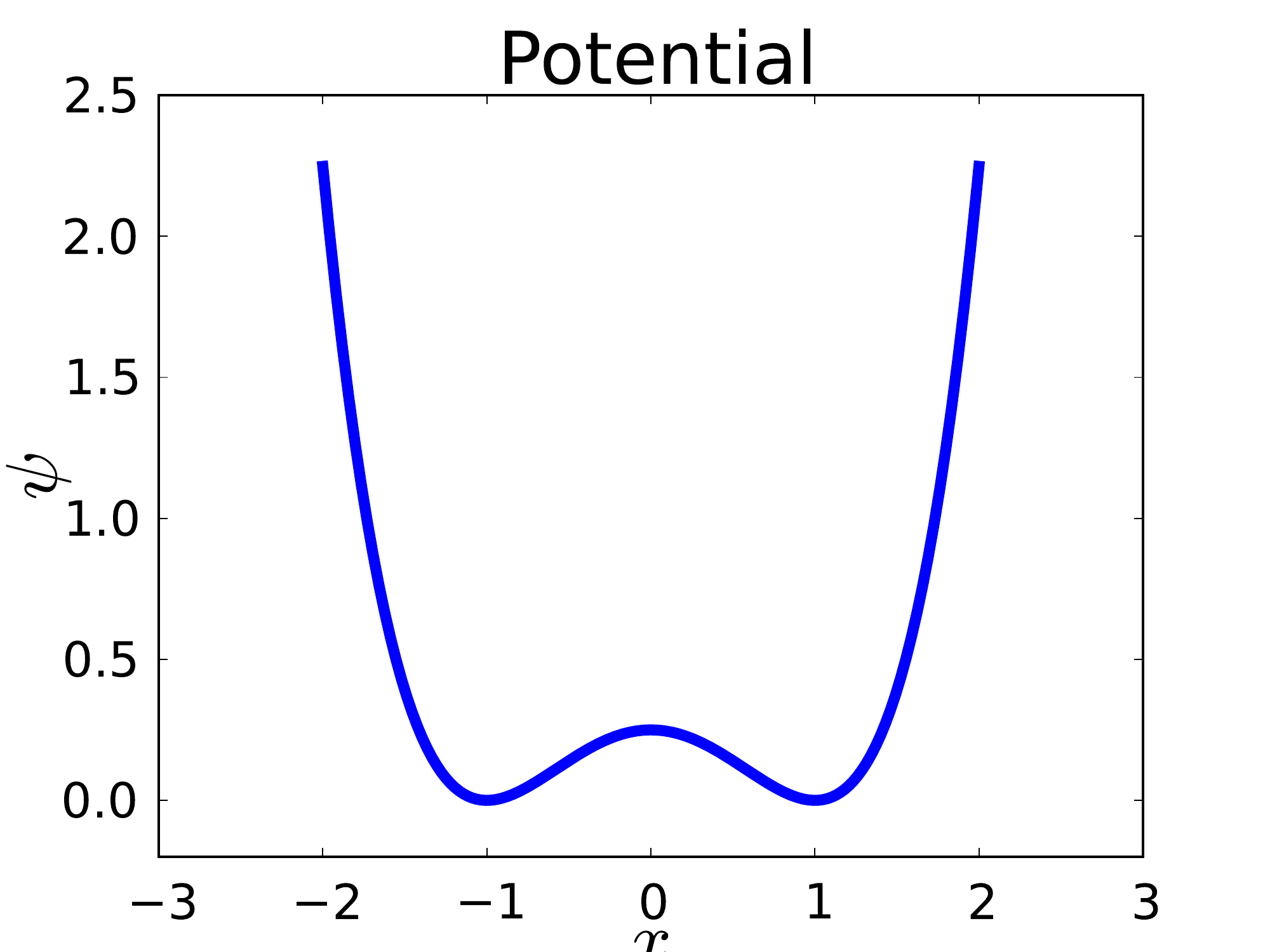} 
\label{fig-ex1A}
}
\end{subfloat}
\begin{subfloat}[]{
\includegraphics[width=5cm]{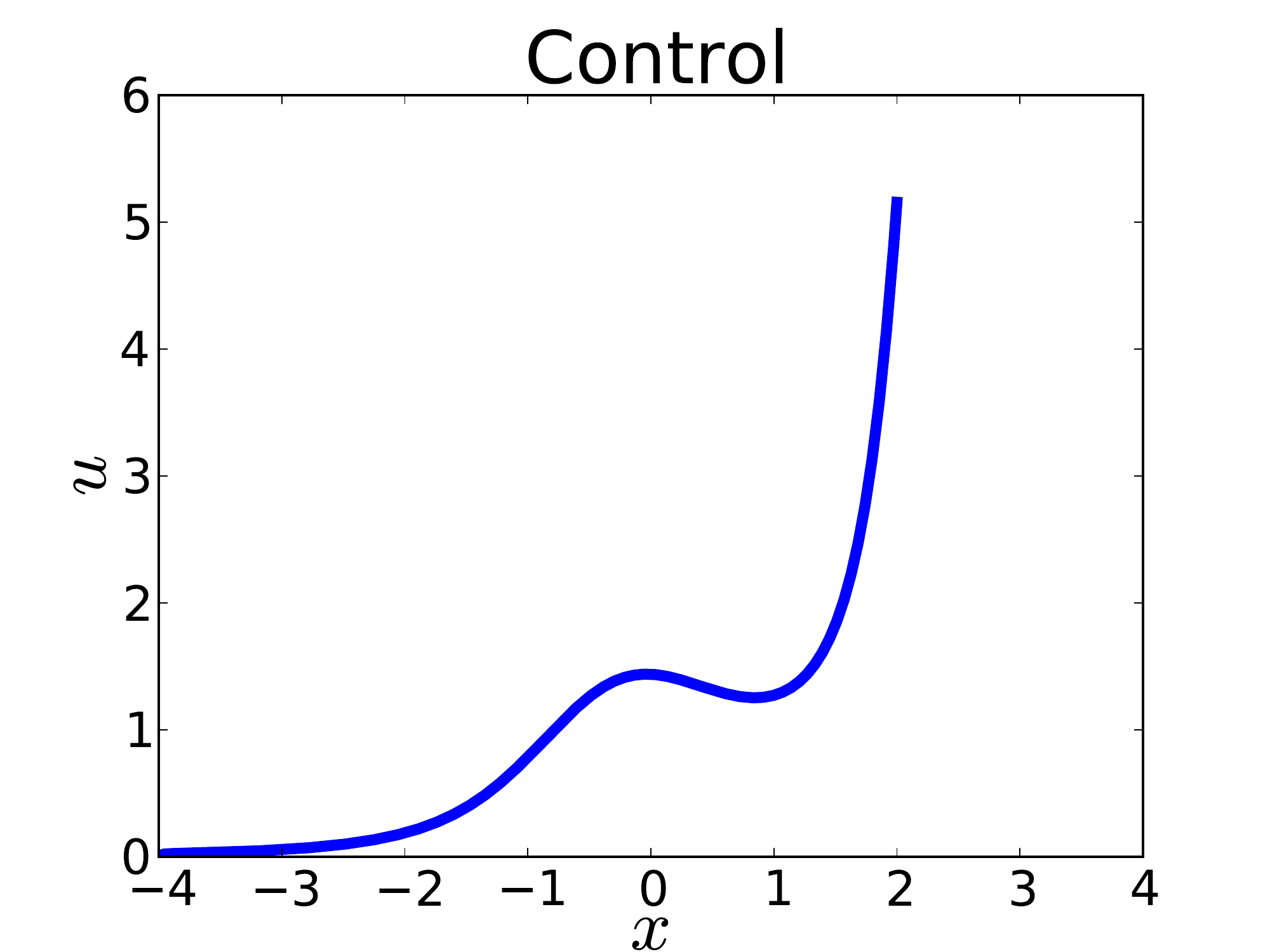}
\label{fig-ex1C}
}
\end{subfloat}
\caption{Overdamped Langevin equation. (A) Double-well potential $\Phi(x)$.  (B) Leading term of optimal control
in (\ref{ex1-u}). \label{fig-ex1}}
\end{figure}
\begin{figure}
\centering
\includegraphics[width=9cm]{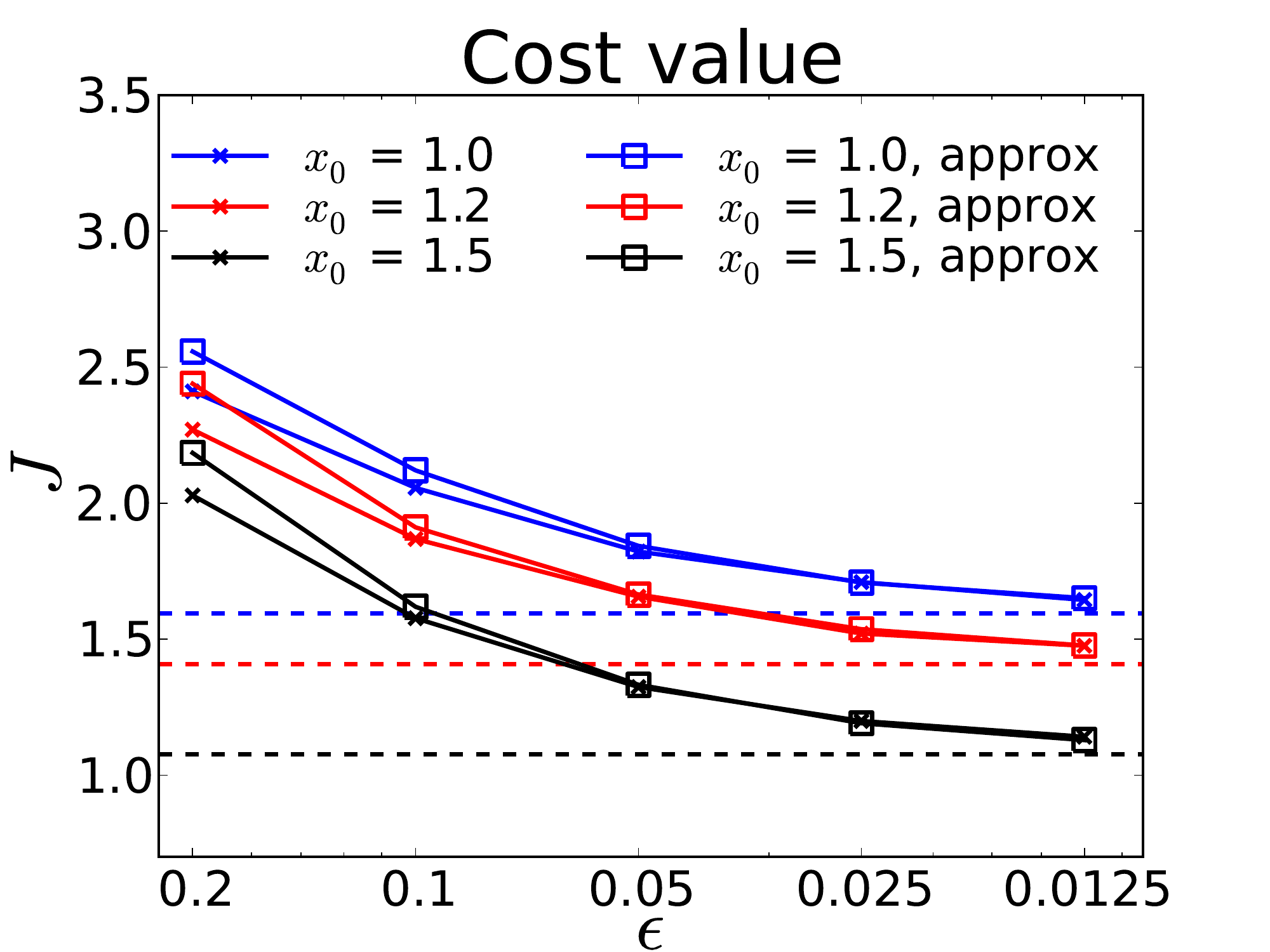}
\caption{Overdamped Langevin dynamics. Cost function for different values of $\epsilon$. Different colors correspond to 
different initial values $x_0$. Lines marked with ``$\times$'' are the value function $V^\epsilon$ computed
from the exponential expectation using Monte-Carlo. Lines marked with
``$\square$'' are the cost function $J^\epsilon = J(\hat{u}^{\eps})$, computed from the homogenized 
control with the original dynamics. We observe that the two values approach $V_0(x_0)$ as $\epsilon \rightarrow 0$ (horizontal
line). \label{fig-ex1D}}
\end{figure}

\subsection{Diffusion in a periodic potential}
\label{subsec-examples-homo}
We now consider the SDE \cite{is_multiscale, pavliotis2008}
\begin{align}
dx^\epsilon_s &= -\nabla \Phi^\epsilon(x^\epsilon_s)ds + \sqrt{2}\beta^{-1/2} dw_s  
\label{ex2-dynamics}
\end{align}
where $\beta > 0$ and $\Phi^\epsilon(x) = \Phi_{0}(x) + p(x/\epsilon)$, with $p(y)$ being a smooth, $1$-periodic
function (see Fig.~\ref{fig:msPot} below). 
We study the optimal control problem
\begin{align} \label{eq:valueFuncOneVar}
V^\epsilon(x) &= \inf_{u^{\eps}} \bE\left(\int_{0}^{\tau} G(x^{\epsilon, u}_s) + \frac{1}{2} |u^\eps_s|^2 ds~\bigg|~x^{\epsilon, u}_0 = x \right),
\end{align}
where 
\begin{align}
dx^{\epsilon,u}_s &= -\nabla \Phi^\epsilon(x^{\epsilon,u}_s)ds +
\sqrt{2}u^\eps_sds + \sqrt{2} \beta^{-1/2} dw_s
\label{controled_ex2}
\end{align}  
and $\tau=\tau^{\eps,u}$ is the first hitting time of the set $\{x\ge 1.5\}$ (blue region in Fig.~\ref{fig:msPot}).
\begin{figure}
\centering
  \includegraphics[width=80mm]{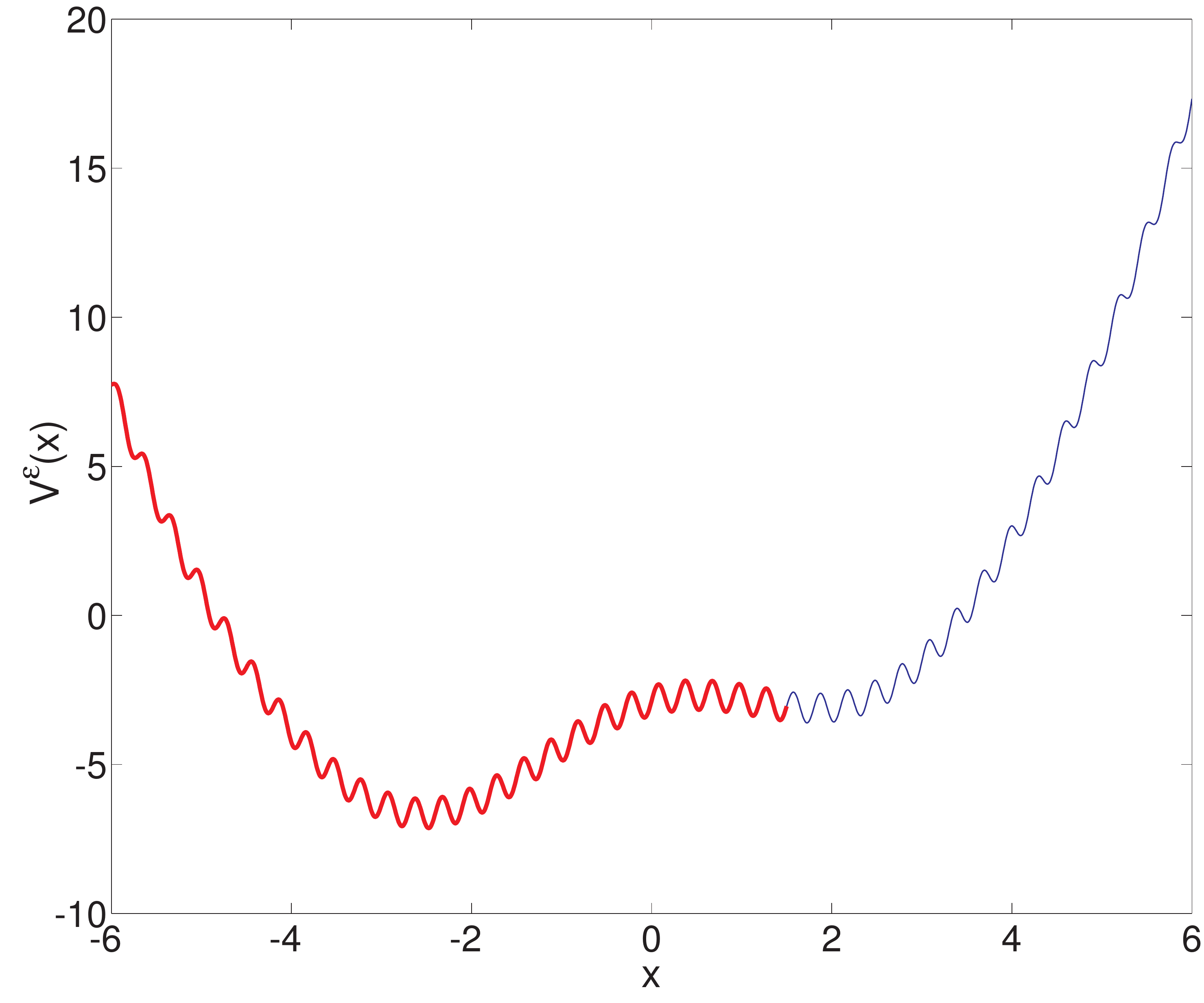}
 \caption{Controlled diffusion in a multiscale potential: minimize the transition time from the red to the blue region. \label{fig:msPot}}
 \end{figure}

In order to relate this system with the homogenization problem studied in Section~\ref{ssec:homo}, we introduce the auxiliary variable
$y^{\epsilon} = x^{\epsilon}/{\epsilon}$ and reformulate
(\ref{ex2-dynamics}) as
\begin{subequations}
\begin{eqnarray}
  &&dx^{\epsilon}_s = -\frac{1}{\epsilon}\nabla p(y^{\epsilon}_s) ds -\nabla \Phi_{0}(x^{\epsilon}_s)ds + \sqrt{2} \beta^{-1/2} dw_s,  \\
  &&dy^{\epsilon}_s = -\frac{1}{\epsilon^2}\nabla p(y^{\epsilon}_s) ds -\frac{1}{\epsilon}\nabla \Phi_{0}(x^{\epsilon}_s)ds +
\frac{1}{\epsilon} \sqrt{2} \beta^{-1/2} dw_s,
\end{eqnarray}
\label{ex2-sde-1}
\end{subequations} 
where $x^{\epsilon}_s,  y^{\epsilon}_s$ are driven by the same noise $w_s$. The associated value function reads
\begin{align*}
\widetilde{V}^\epsilon(x, y) &= \inf_{u^{\eps}} \bE\left(\int_{0}^{\tau} G(x^{\epsilon, u}_s) + \frac{1}{2}|u^\eps_s|^2 ds~\bigg|~x^{\epsilon, u}_0 = x, y^{\epsilon, u}_0 =
y \right),
\end{align*}
with 
\begin{subequations} \label{eq:controledPeriodicMS}
\begin{align} 
  &  dx^{\epsilon,u}_s = -\frac{1}{\epsilon}\nabla p(y^{\epsilon,u}_s) ds -\nabla	
\Phi_{0}(x^{\epsilon,u}_s)ds + \sqrt{2} u^\epsilon_s\, ds + \sqrt{2} \beta^{-1/2} dw_s,  \\
&dy^{\epsilon,u}_s = -\frac{1}{\epsilon^2}\nabla p(y^{\epsilon,u}_s) ds
-\frac{1}{\epsilon}\nabla \Phi_{0}(x^{\epsilon,u}_s)ds +
\frac{1}{\epsilon} \sqrt{2} u^\epsilon_s\, ds +
\frac{1}{\epsilon} \sqrt{2} \beta^{-1/2} dw_s \, .
\end{align}
\end{subequations}
Notice that the same noise and the same control are applied to both equations.
Clearly $V^\epsilon(x) =
\widetilde{V}^\epsilon(x,x/\epsilon)$ and the dual relation
 $\widetilde{V}^\epsilon(x,y) = -\beta^{-1}\log \psi^\epsilon(x, y)$ applies, where
 $\psi^\eps$ is defined as in Section \ref{ssec:homo}. The generator of
(\ref{ex2-sde-1}) now is 
\[
\cL^\eps = \frac{1}{\epsilon^2}\cL_0 + \frac{1}{\epsilon}\cL_1+\cL_2 ,
\]
with 
\begin{align*}
\cL_{0} &= -\nabla p\cdot\nabla_y + \beta^{-1}\Delta_y , \\
\cL_{1} & = -\nabla p\cdot\nabla_x-\nabla \Phi_{0}\cdot
\nabla_y+2\beta^{-1}\nabla_x\nabla_y , \\
\cL_{2} & = -\nabla \Phi_{0} \cdot
\nabla_x+\beta^{-1}\Delta_x\,.
\end{align*}

\subsection*{Homogenized control system}
Applying the results of Section \ref{sec:mms}, we conclude that the
leading term of $V^\epsilon(x)$ is the value function of the following reduced-order optimal control problem: minimize 
\begin{equation} 
J(u)= \bE\left(\int_{0}^{\tau}G(x^u_s) + \frac{1}{2} |u_s|^2 ds\right),
\label{eq:valueFuncHom}
\end{equation}  
subject to the homogenized dynamics
\begin{equation} \label{eq:controlledSDEhomo}
dx^u_s = -K\nabla \Phi(x^u_s)ds + \sqrt{2K} u_s ds  + \sqrt{2K} \beta^{-1/2} dw_s,
\end{equation}
with the effective diffusivity
\begin{equation*}
K=\int (I + \nabla_y \Theta(y))(I + \nabla_y \Theta(y))^T \rho(y) \,dy\,.
\end{equation*}
In the above formula $\rho(y) = Z^{-1}\exp(-\beta p(y))$ denotes the invariant density 
of the fast variable $y$ and $\Theta(y)$ is the solution of the Poisson equation
\begin{displaymath}
\cL_0\Theta(y)=\nabla p(y).
\end{displaymath}
Specifically, we have (cf.~\cite{PavlSt06} for details)
\begin{displaymath}
K^{-1} =  \int_0^1 \exp(-\beta p(y))\, dy  \int_0^1 \exp(\beta p(y))\, dy\,. 
\end{displaymath}
 
The value function of the homogenized control problem \eqref{eq:valueFuncHom}--\eqref{eq:controlledSDEhomo} and the corresponding optimal control satisfy 
\[
V_0(x)=-\beta^{-1} \log \psi_0(x)
\]
and 
\begin{equation} \label{eq:optHomogenizedControl}
\hat{u}_{t}=-\sqrt{2K} \nabla V_0(x^{\hat{u}}_{t}), 
\end{equation}
where
\begin{equation*}
\bar{\cL} \psi_0(x) = K \cL_2 \psi_0(x) = \beta G(x) \psi_0(x), \quad 
\psi_0(x)\big|_{ \partial O}=0,
\end{equation*}
as given in \eqref{fk2}.

\subsection*{Reduced model is not backward stable} 
In contrast to the previous example, however, the optimal control $\hat{u}$ obtained 
from the homogenized equation alone does meet the requirements of backward stability. This can be understood by 
noting that the optimal control
the original dynamics is given by  the feedback law
\begin{equation}\label{ex2-u}
\begin{aligned}
  c^\epsilon(x) &= -\sqrt{2}\nabla V^\epsilon(x) =
\sqrt{2}\beta^{-1}\frac{\nabla_x\psi^\epsilon(x,x/\epsilon)}{\psi^\epsilon(x,x/\epsilon)}\\
& = 
\sqrt{2}\beta^{-1}\frac{\nabla_x\psi_0(x)+\nabla_y\psi_1(x,x/\epsilon)}{\psi_0(x)}
+ \cO(\eps),
\end{aligned}
\end{equation}
which can be formally derived from the expansion
\[
\psi^\epsilon(x,x/\epsilon) = \psi_0(x) +
\epsilon\psi_1(x, x/\epsilon) + \ldots\,.
\]  
After some manipulations we find that the asymptotic expression for $c^\epsilon$ reads 
\begin{equation}\label{ex2-u-2}
\begin{aligned}
 c^\epsilon(x) =& \sqrt{2}\beta^{-1} \frac{\exp(\beta p(x/\epsilon)) }{\int_0^1
\exp(\beta p(z))\,dz }  \frac{\psi_0'(x)}{\psi_0(x)} +
\cO(\epsilon) \\
&= \frac{\exp(\beta p(x/\epsilon)) }{\sqrt{K}\int_0^1
\exp(\beta p(z))\,dz } c(x) + \cO(\epsilon),
\end{aligned}
\end{equation}
where we used the shorthand $c(x)=-\sqrt{2K} \nabla V_0(x)$ in the last row. Therefore we conclude that $c^{\eps}$ must be of the form
\[
c^{\eps}(x) = \tilde{c}(x,x/\eps)+ \cO(\epsilon)\,.
\]
Yet $\tilde{c}(x, x/\epsilon)$ does not converge to $c(x)$ in any reasonable norm, for the $x/\eps$ part keeps oscillating as $\eps\to 0$. What does converge, however, is the  \emph{average}: 
\begin{displaymath}
  \int_0^1 \tilde{c}(x,y)\rho(y) d y =  \int_0^1 \tilde{c}(x,y)
  \frac{e^{-\beta p(y)}}{\int_0^1 e^{-\beta p(z)} d z} d y = \sqrt{K} c(x)\,.
\end{displaymath}
This fact is illustrated in Figure \ref{fig:multiscaleControl} which shows the oscillations of order one that are a consequence of the $\eps$-periodic oscillations of the value function; since the optimal control law involves the derivative of the value function, oscillations of size $\eps$ in the value function turn into $\cO(1)$ contributions to the optimal control. Figure \ref{fig:convVF} shows the difference between the homogenized value function $V_0(x)$ and its multiscale counterpart $V^\epsilon(x)$ in the $L^{2}$-norm. The figure also shows the $L^{2}$-difference between the multiscale optimal feedback law $c^\epsilon(x)$ and the corrected homogenized feedback law $\tilde{c}(x,x/\epsilon)$, including the oscillatory correction. This demonstrates strong $\cO(\eps)$ convergence  in $L^{2}$ of both value function and optimal control.   
 
\begin{figure}
\centering
   \includegraphics[width=80mm]{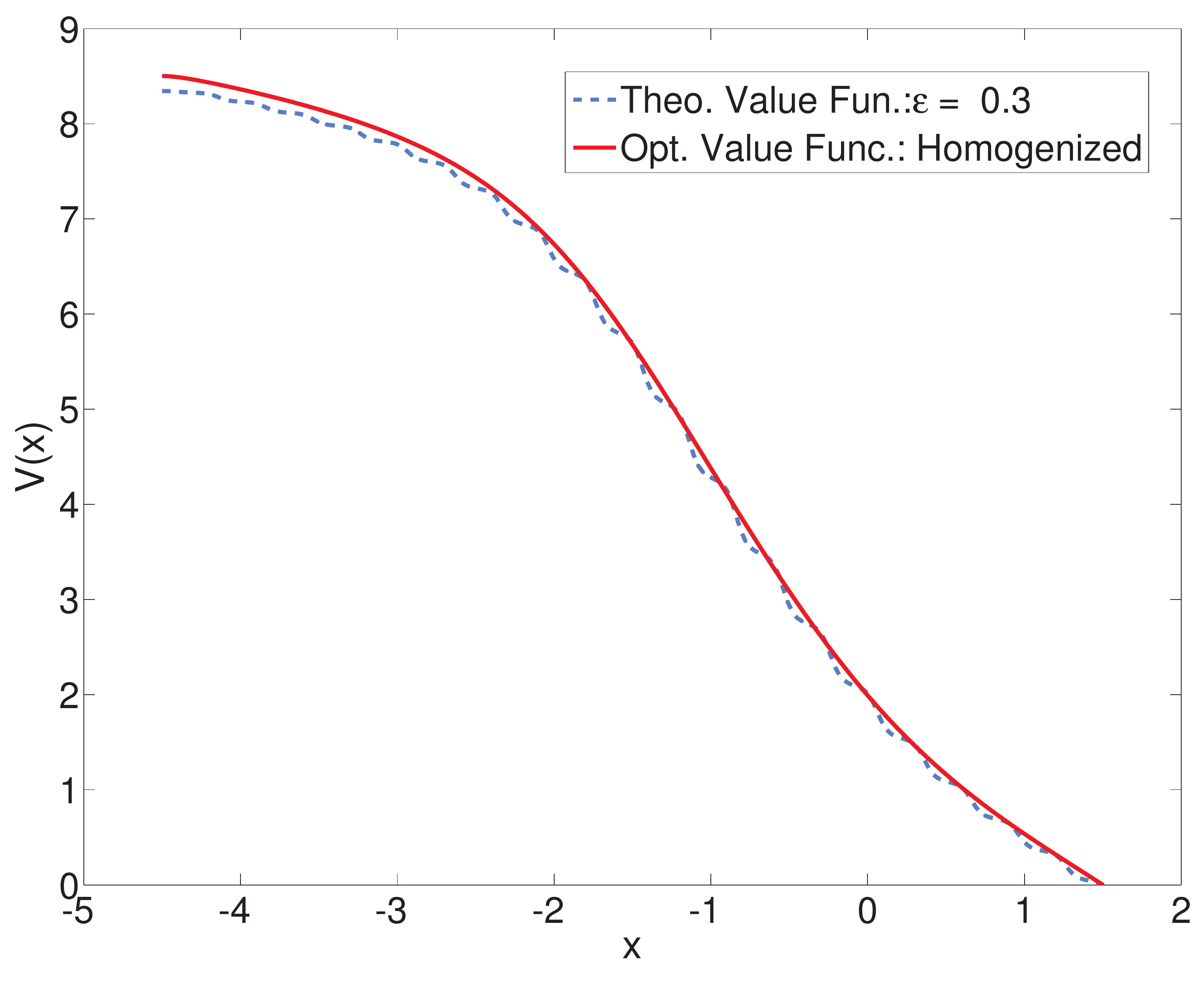}\\
\hspace{10pt}  \includegraphics[width=80mm]{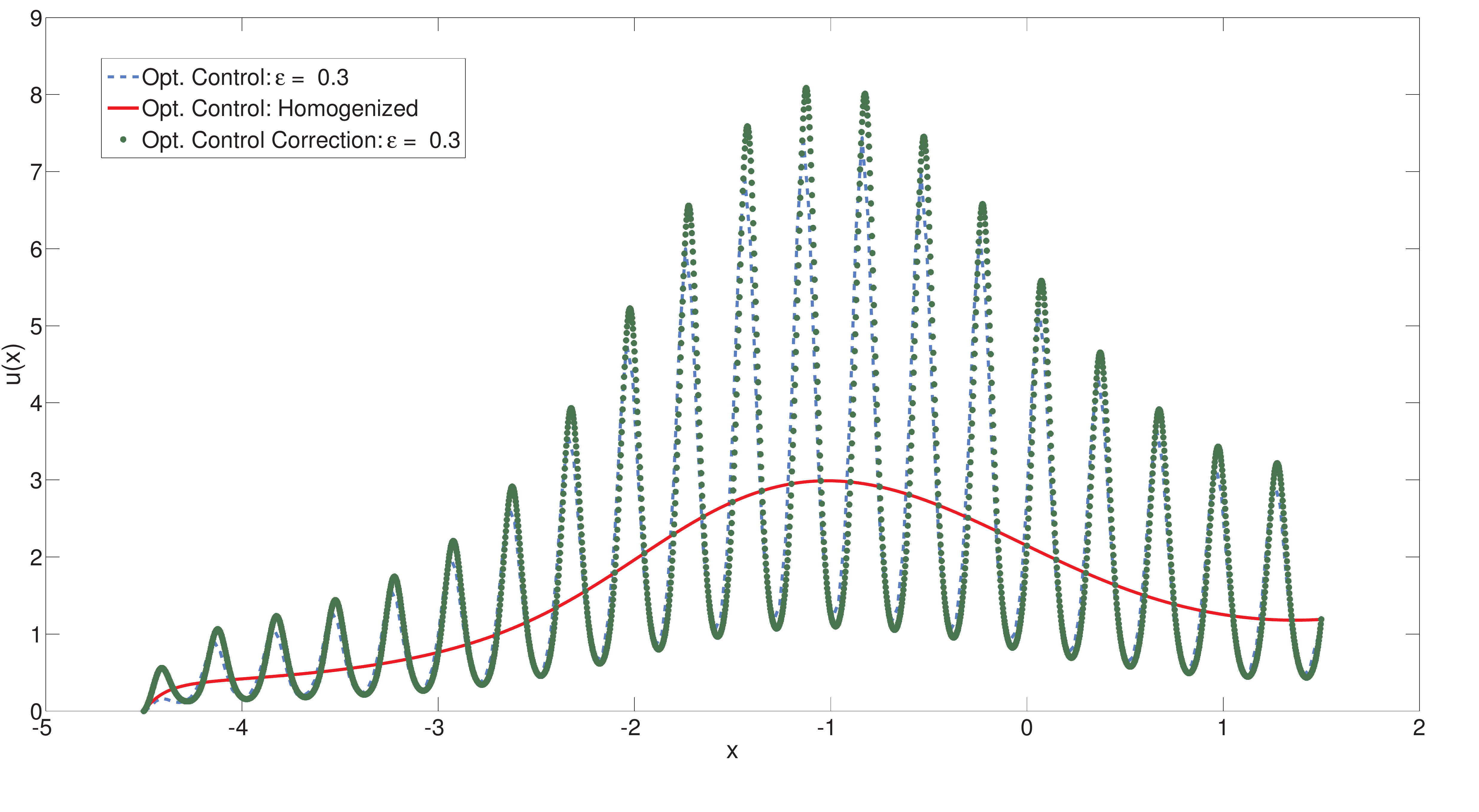}
 \caption{Value function and resulting optimal control (lower panel).} \label{fig:multiscaleControl}
 \end{figure}

\begin{rem}
The above case is an example in which using a reduced-order models for optimal control is not recommended, for $J(\hat{u}^{\eps})$ does not converge to $ J(\hat{u})$ as $\eps\to 0$.  Nonetheless, Theorem~\ref{thm:entropyIndefinite} suggests that we can use the leading term of $c^\epsilon$ in (\ref{ex2-u-2}) as an approximation of the feedback law for the multiscale dynamics (\ref{controled_ex2}). The effect of the corrector estimate (\ref{ex2-u-2}), is to enforce convergence of the \emph{derivative} of the value function, which entails (weak) convergence of the optimal control and convergence  of the optimal cost value (cf.~\cite{is_multiscale} for an application in importance sampling). 
\end{rem}

\begin{figure}
\includegraphics[width=95mm]{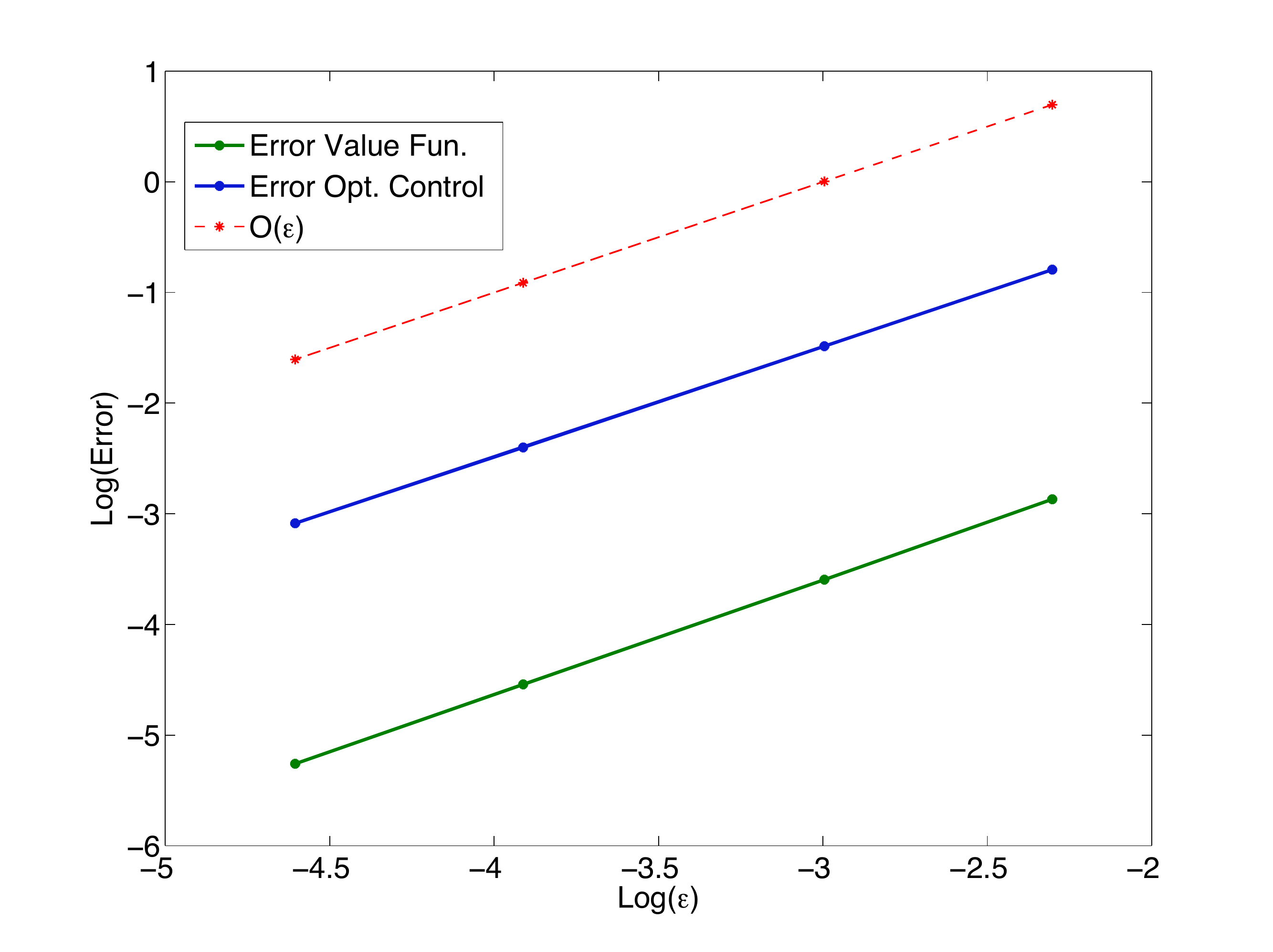}
\caption{Strong $L^{2}$ convergence of value function and optimal control.\label{fig:convVF}}
\end{figure}

\subsection*{Mean first passage time and value function.}
As a specific example, we have solved the optimal control problem
\eqref{eq:valueFuncOneVar}--\eqref{controled_ex2} for the mean first passage time, with $G(x)=1$ and $\tau$ being the first passage time of the set $\{x\le 1.5\}$, and compared it with the solution of the homogenized system \eqref{eq:valueFuncHom}--\eqref{eq:controlledSDEhomo}. The potential $\Phi_{0}$ is chosen to be a tilted double-well potential, 
\begin{displaymath}
 \Phi_0(x)=-5 (\exp{(-0.2(x+2.5)^2)}+\exp{(-0.2(x-2.5)^2)})+0.01x^4+0.8x\,,
\end{displaymath}
with periodic perturbation
\begin{displaymath}
 p(y)=0.5 \sin{2\pi y}\,.
\end{displaymath}
We have solved the associated boundary value problems using the finite-volume method presented in \cite{latorre2011} using a mesh sufficiently fine for the error to be smaller than a certain threshold. The resulting value functions are presented in Figure \ref{fig:vf1}. For comparison, we have also simulated the multiscale system driven by the optimal control for the homogenized system \eqref{eq:optHomogenizedControl},
\begin{equation} \label{eq:micro_wrong}
 dx^{\epsilon,\hat{u}}_s = -\frac{1}{\epsilon}\nabla p(x^{\epsilon,\hat{u}}_s/\epsilon) ds -\nabla \Phi_{0}(x^{\epsilon,\hat{u}}_s)ds + \sqrt{2}\hat{u}_s\, ds + \sqrt{2} \beta^{-1/2} dw_s\,,
\end{equation}
with $\hat{u}_{t}=\hat{c}(x^{\eps,u}_{t})$ and $\hat{c}=-\sqrt{2K}\nabla V_0$. This situation amounts to using the (wrong) homogenized control in the original multiscale dynamics. To illustrate the shortcoming of such an approach, we have calculated the control value 
\begin{displaymath}
J(\hat{u};x)= \bE\left(\tau + \int_{0}^{\tau} \frac{1}{2} |\hat{u}_s|^2 ds~\bigg|~x_0^{\hat{u}} = x\right), 
\end{displaymath}
by Markov-jump Monte Carlo (MJMC) simulations (see \cite{latorre2011}). 
As it is shown in Figure \ref{fig:vf1}, equation \eqref{eq:micro_wrong} does not capture the control value $J(\hat{u}^{\eps})$ as $\eps\to 0$; in order to reproduce the control value correctly, one must instead use the corrected control 
\begin{equation}\label{eq:micro_right}
\tilde{u}_{t}=\tilde{c}(x^{\eps,u}_{t},x^{\eps,u}_{t}/\epsilon)\,,
\end{equation} 
as given in \eqref{ex2-u-2}.

\begin{figure}[h!]
 \centering
 \begin{subfloat}[$\eps=0.1$]{
  \centering
  \includegraphics[width=70mm]{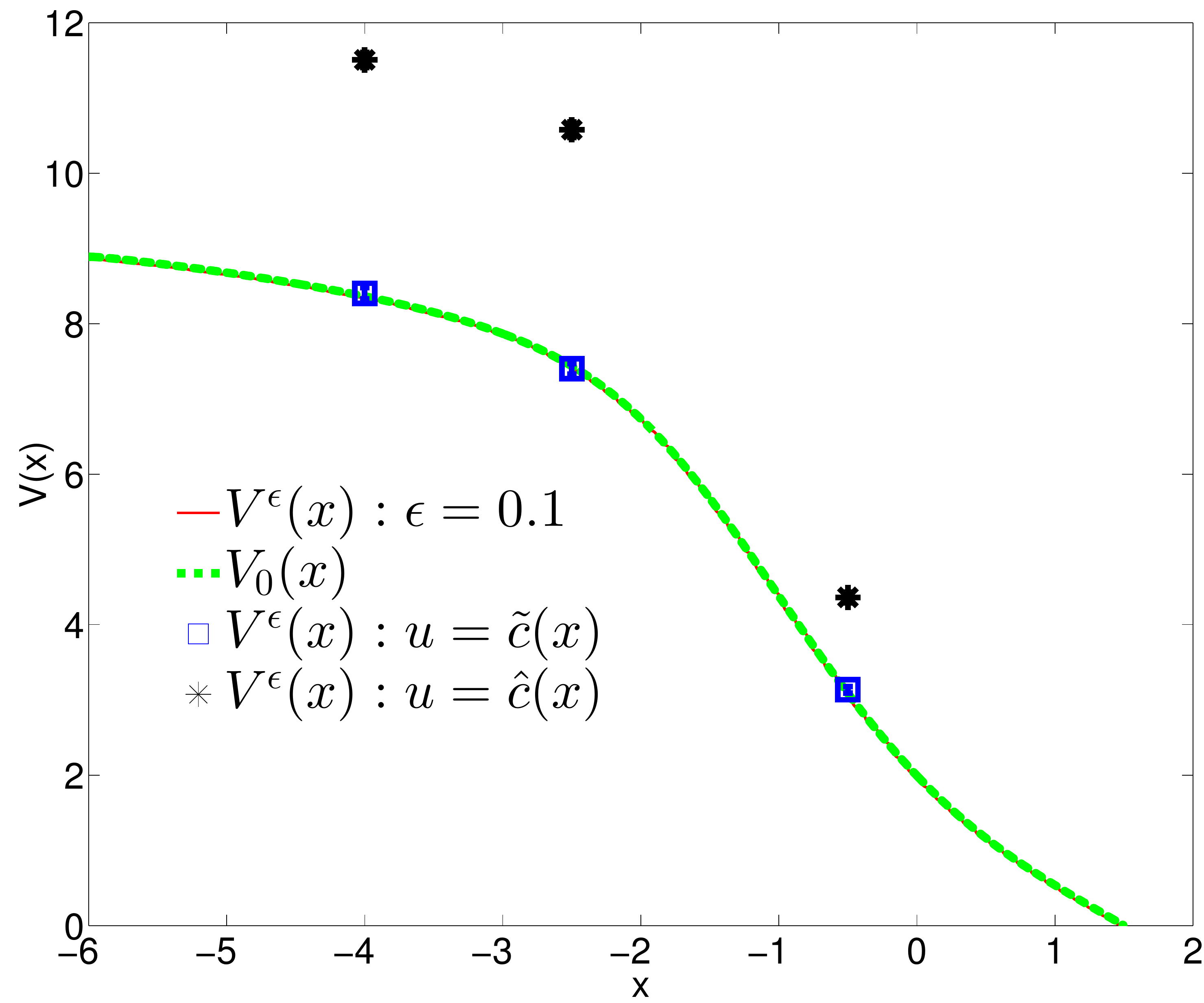}
   \label{fig:vfEps1}}
 \end{subfloat}
 ~ \begin{subfloat}[$\eps=0.05$]{
  \centering
  \includegraphics[width=70mm]{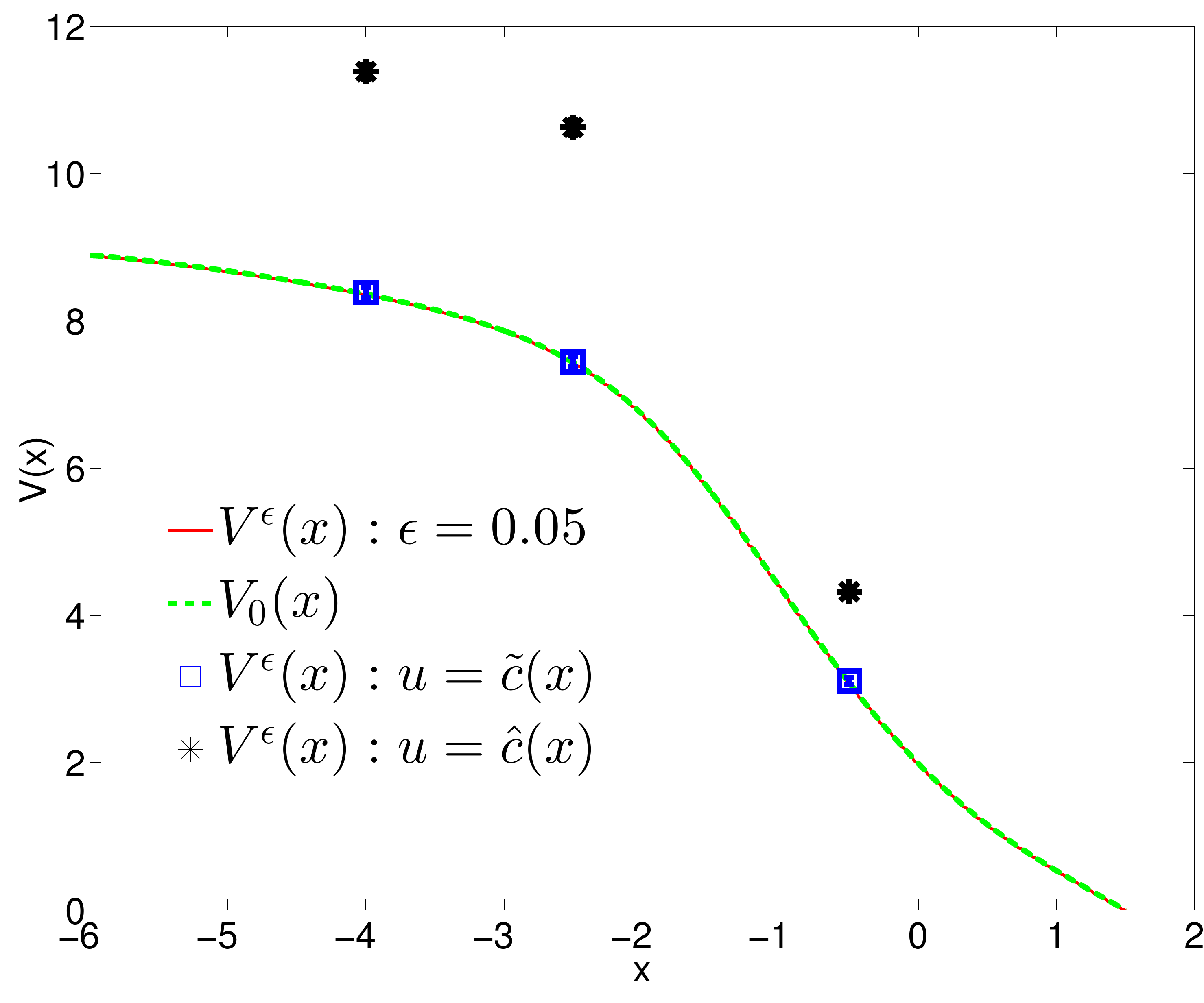}
  \label{fig:vfEps2}}
 \end{subfloat}
 \caption{Optimal value function for different values of $\eps$. Solid line: numerical solution of eq.~\eqref{linbvp}. Dashed line: numerical solution of eq.~\eqref{fk2}. {\color{black} $\star$}: MJMC sampling of \eqref{eq:micro_wrong}. {\color{blue} $\square$}: MJMC sampling using \eqref{eq:micro_right}. Throughout the simulations we have set $\beta=2$} \label{fig:vf1}
\end{figure}

\subsection{Linear-quadratic regulator}\label{subsec-examples-linear}

The third example is a multiscale linear quadratic regulator (LQR) problem that slightly falls out of the previous category. Specifically, we seek to minimize the time-averaged quadratic cost
\begin{equation}\label{LQR1}
J^\eps(u) = \limsup_{T\to\infty}\bE\left(\frac{1}{T}\int_0^{T}
\left\{|x^{\eps,u}_{s}|^{2} + |y^{\eps,u}_{s}|^{2} + \frac{1}{2}|u^\eps_{s}|^{2} \right\}ds
\right)
\end{equation}
subject to the linear dynamics
\begin{equation}\label{LQR2}
\begin{aligned}
dx^{\epsilon,u}_s & = \left(A_{11}x^{\epsilon,u}_s +\frac{1}{\eps}
A_{12}y^{\epsilon,u}_s + \sqrt{2} B_1 u^\eps_s \right) ds + \sqrt{2\beta^{-1}} B_1dw_s \\
dy^{\epsilon,u}_s & = \left(\frac{1}{\eps} A_{21}x^{\epsilon,u}_s +
\frac{1}{\eps^{2}} A_{22}y^{\epsilon,u}_s+ \frac{1}{\eps}\sqrt{2} B_2 u^\eps_{s} \right)ds +
\frac{1}{\epsilon} \sqrt{2} \beta^{-1/2} B_2dw_s
\end{aligned}
\end{equation}
where $x\in \mathbb{R}^k, y \in \mathbb{R}^{n-k}$, $u\in\R^{l}$, and $A_{ij}$, $B_{i}$ are real matrices of appropriate size. Note that both slow and fast equations are driven by the same noise and control. Further let 
\[
A = \left(\begin{array}{cc}
A_{11} & \eps^{-1} A_{12} \\ \eps^{-1} A_{21} & \eps^{-2} A_{22}
\end{array}\right),\quad B
= \sqrt{2}\left(\begin{array}{c}
B_{1}\\\eps^{-1}B_{2}
\end{array}\right).
\]
We make the following additional assumptions (we suppress the $\eps$ in the matrix definition in order to keep  the notation compact):
\begin{enumerate}
\item The initial values $(x^{\eps}_{0},y^{\eps}_{0})=(x_{0},y_{0})$ are independent of $\eps$ and satisfy 
\[
\bE[|x_{0}|^{2}]<\infty\,,\quad \bE[|y_{0}|^{2}]<\infty\,.
\] 
\item For all $\eps>0$, the spectrum of $A$ lies entirely in the open left half complex plane, i.e., all eigenvalues of $A$ have strictly negative real part. 
\item The spectrum of $A_{22}$ lies entirely in the open left half complex plane. 
\item For all $\eps>0$, the matrix pair  $(A,B)$ is controllable, i.e., the matrix
\[
K^{\eps} = (B\; AB\; A^{2}B\;\ldots\;A^{n-1}B^{\eps})
\]
has maximum rank $n$.
\end{enumerate}
For the control problem (\ref{LQR1})--(\ref{LQR2}), the analog of (\ref{hjb}) for the case of an infinite-time horizon with time-averaged cost and unbounded domain reads \cite{robin1983,ScWiHa2012}
\begin{equation}\label{ergodHJB}
\eta^{\eps} = \min_{c\in\R^n}\left\{\cL^{\eps}(c) V^{\eps} + |z|^{2} +
\frac{1}{2}|c|^{2}\right\}
\end{equation}
where $z=(x,y)$ and 
\[
\cL^{\eps}(u) = (2\beta)^{-1}BB^{T}\colon\nabla^{2} + (Az + Bu)\cdot\nabla
\]
The unknown parameter $\eta^{\eps}\in\R$ in the Hamiton-Jacobi-Bellman equation (\ref{ergodHJB}) needs to be determined along with the function $V^{\eps}=V^{\eps}(x,y)$, in fact (\ref{ergodHJB}) can be regarded as a nonlinear eigenvalue equation for the pair $(\eta^{\eps},V^{\eps})$; for details we refer to Appendix \ref{sec:ergode}. 

LQR problems of this kind have quadratic value functions and admit an explicit solution in terms of an algebraic Riccati equation
\begin{equation}\label{ricc}
A^{T}S^{\eps} + S^{\eps}A - 2 S^{\eps}BB^{T}S^{\eps} + I_{n\times n} = 0\,,
\end{equation}
where $I_{n \times n}$ denotes the $n \times n$ identity matrix. Specifically, plugging the ansatz 
\[
V^{\eps}(z) =  z^{T}S^{\eps} z
\]  
into (\ref{ergodHJB}), it readily follows that $S^{\eps}$ solves (\ref{ricc}).
Hence the optimal control for the linear quadratic regulator
(\ref{LQR1})--(\ref{LQR2}) is given by the linear feedback law 
\[
  \hat{u}^{\eps}_{t} = - B^{T}S^{\eps}z_{t}\,.
\]
Under the above assumptions, the Riccati equation has a unique symmetric positive definite solution $S^{\eps}$ for all values of $\eps>0$. Moreover, it follows that  
\[
\eta^{\eps} = BB^{T}\colon S^{\eps}\,,
\] 
which is the principal eigenvalue of the linear eigenvalue equation
\begin{equation}\label{linEVP}
(2\beta)^{-1}BB^{T}\colon\nabla^{2}\psi^{\eps} + (Az) \cdot\nabla\psi^{\eps} -  \beta |z|^{2}\psi^{\eps} = -\beta \eta^{\eps}\psi^{\eps}
\end{equation}
for the log-transformed eigenfunction $\psi^{\eps}=\exp(-\beta V^{\eps})$. Notice that the eigefunction $\psi^{\eps}$ corresponding to the principal eigenvalue $-\beta\eta^{\eps}\le 0$ is strictly positive as a consequence of the Perron-Frobenius theorem, hence its log transformation is well defined.   

\subsection*{Reduced Riccati equation}

Given the above assumptions on the matrices $A$ and $B$, the homogenized version of the linear eigenvalue equation (\ref{linEVP}) can be easily computed, since the cell problem has an explicit solution. We find 
\begin{equation}\label{linEVPred}
(2\beta)^{-1}\bar{B}\bar{B}^{T}\colon\nabla^{2}\psi + (\bar{A}z) \cdot\nabla\psi - \beta (|x|^{2} + Q)\psi = -\beta\eta\psi
\end{equation}
with the homogenized coefficients 
\[
\bar{A} = A_{11} - A_{12}A^{-1}_{22}A_{21}\,,\quad  \bar{B} = \sqrt{2}\left(B_1 + A_{12}A^{-1}_{22}B_2\right) \,
\]
and 
\[
Q = 2\beta^{-1}{\rm tr}\left(\int_{0}^{\infty}e^{A_{22}t}B_{2}B_{2}^{T}e^{A_{22}t}dt\right)\,,
\]
denoting the sum of the eigenvalues of the asymptotic covariance matrix of the fast degrees of freedom.  
The limiting eigenpair $(\eta,\psi)$ is given by 
\[
 \eta = \bar{B}\bar{B}^{T}\colon S + Q\,,\quad \psi(x)=e^{-\beta x^{T}Sx }\,
\]
where $S$ is the solution of the homogenized Riccati equation
\begin{equation}\label{riccred}
\bar{A}^{T}S + S\bar{A} - 2 S\bar{B}\bar{B}^{T}S + I_{k\times k} = 0\,, 
\end{equation}
in accordance with the solution of the algebraic Riccati equation of singularly-perturbed LQR problems that has been discussed in the literature; see \cite{gajic2001} and the references therein. 
It can be shown by perturbation analysis of the Riccati equation (\ref{ricc}) using the Chow transformation (see, e.g., \cite{Kokotovic1987} and the references therein) that $S$ corresponds to the top left $k\times k$ block of the matrix $S$ up to $\cO(\eps^{2})$. Moreover, for any open and bounded subset $\Omega\subset\R^{n}$ with smooth boundary, we have  
\[
\|V^{\eps} - V \|_{H^{1}(\bar{\Omega})} \le C_{1}\eps^{2}\,.
\]
for $V=-\beta^{-1}\log\psi$ and some constant $0<C_{1}<\infty$. The latter implies that
\[ 
  |\hat{u}^{\eps}_{s} - \hat{u}_{s} | \le C_{2}\eps\,
\]
uniformly on $[0,\tau_{\Omega}]$ where $\tau_{\Omega}$ is the first exit time from $\Omega\subset\R^{n}$ and $0<C_{2}<\infty$.  For large values of $\beta$ the probability that the process exits from $\Omega$ is exponentially small in $\beta$, i.e., the exit from the domain is a rare event  (see, e.g., \cite{zabczyk1985}) and hence we can employ the approximation $\tau_{\Omega}\approx\infty$ for all practical purposes.        

\subsection*{270-dimensional ISS model} 

We consider the 270-dimensional model of a component of the International Space Station (ISS) that is taken from the SLICOT benchmark library  \cite{chahlaoui2005b}. In this case, $n=270$ and $l=3$ in equation (\ref{LQR1}); the dimension of the slow subspace is set to $k=4$, because the spectrum of dimensionless Hankel singular values of the full system shows a significant spectral gap at $k=4$ when the slow variables are chosen as the observed variables; see \cite{Ha11} for details. The original system is Hamiltonian, but we pay no attention to the specific geometric structure of the equations here; cf.~\cite{balance_hartmann} for related work. The corresponding control task for the 4-dimensional reduced system thus is to minimize
\begin{equation}\label{LQRred1}
\bar{J}(u) = \limsup_{T\to\infty}\bE\left(\frac{1}{T}\int_0^{T}
\left\{|x^{u}_{s}|^{2} + \frac{1}{2}|u_{s}|^{2} \right\}ds
\right)
\end{equation}
subject to the dynamics
\begin{equation}\label{LQRred2}
dx^{u}_s = \left(\bar{A}x^{u}_s + \bar{B} u_s \right) ds + \beta^{-1/2} \bar{B}dw_s\,,
\end{equation}
with $\bar{A}$ and $\bar{B}$ as in (\ref{riccred}).  Without loss of generality, we have ignored the additive constant $Q$
in the cost term that appears in the homogenized eigenvalue equation
(\ref{linEVPred}). As before the optimal control is given by the linear feedback law
\[
  \hat{u}_{s} = -\bar{B}^{T}Sx_{s}\,.
\]   
where $S$ denotes the solution of (\ref{ricc}). To verify the convergence of the value function numerically, we have computed eigenvalues of $S$ and $S^{\eps}$, the matrix norms of $S  - S^{\eps}_{11}$ and the norm of the matrix $S^{\eps}$ with the $S^{\eps}_{11}$ block set to zero, called $S^{\epsilon}_{r}$. Here $S^{\eps}_{11}$ refers to the upper left $k\times k$ block of the matrix $S^{\eps}$, in accordance with the notation in (\ref{LQR2}).  Figure~\ref{fig:lqr} shows this comparison for $\beta=0.01$, which, given the parameters of the ISS model, amounts to the small noise regime; the plots clearly show that the convergence is of $\cO(\eps^{2})$. 
We refrain from testing the convergence  $\eta^{\eps}\to\eta$ of the corresponding nonlinear eigenvalue since the $1/\eps^{2}$ singularity makes the evaluation of the trace term $BB^{T}\colon S^{\eps}$ numerically unstable for all interesting values of $\eps$.

\begin{figure}[h!]
 \begin{center}
  \includegraphics[width=0.495\textwidth]{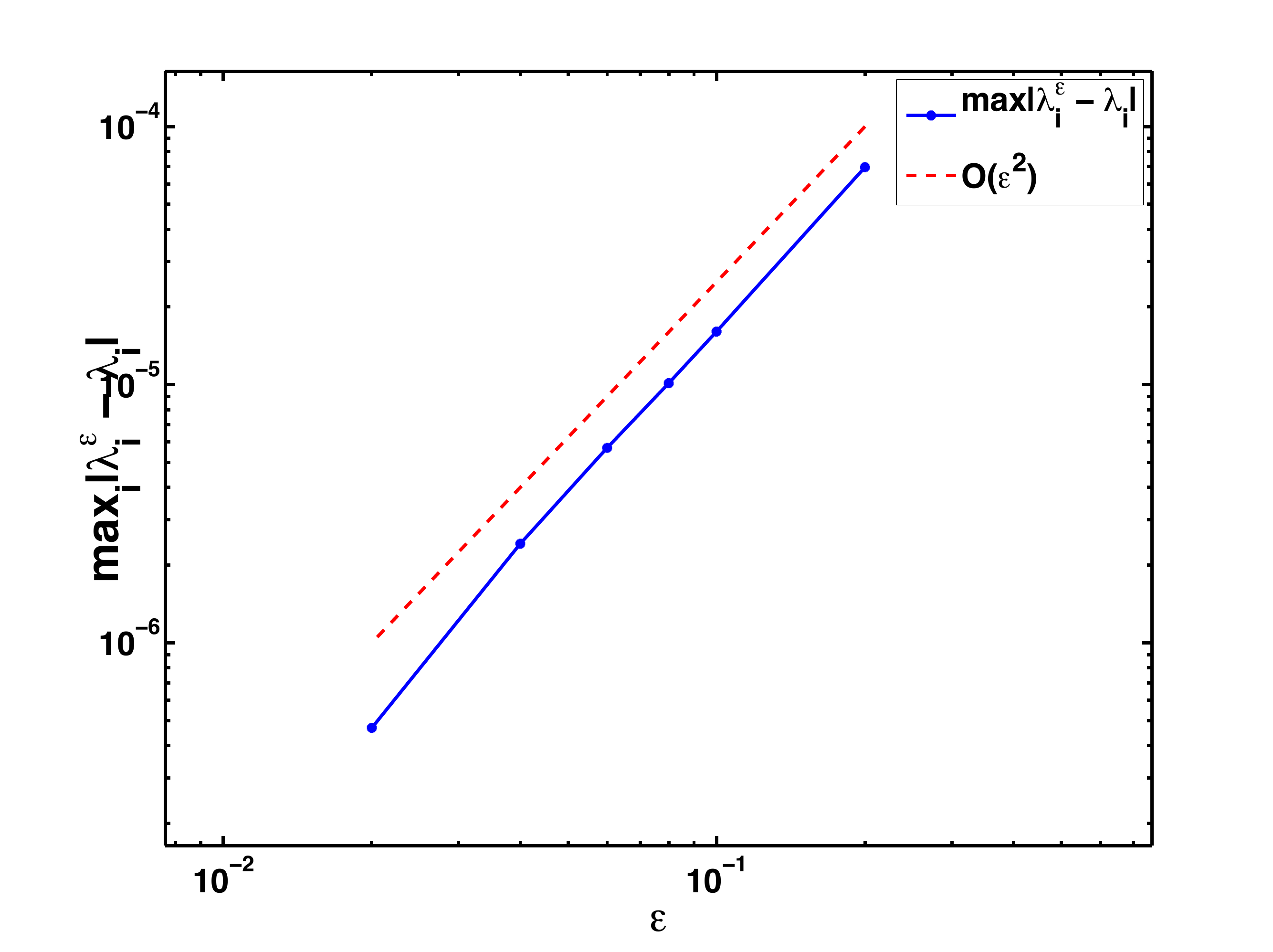}
  \includegraphics[width=0.495\textwidth]{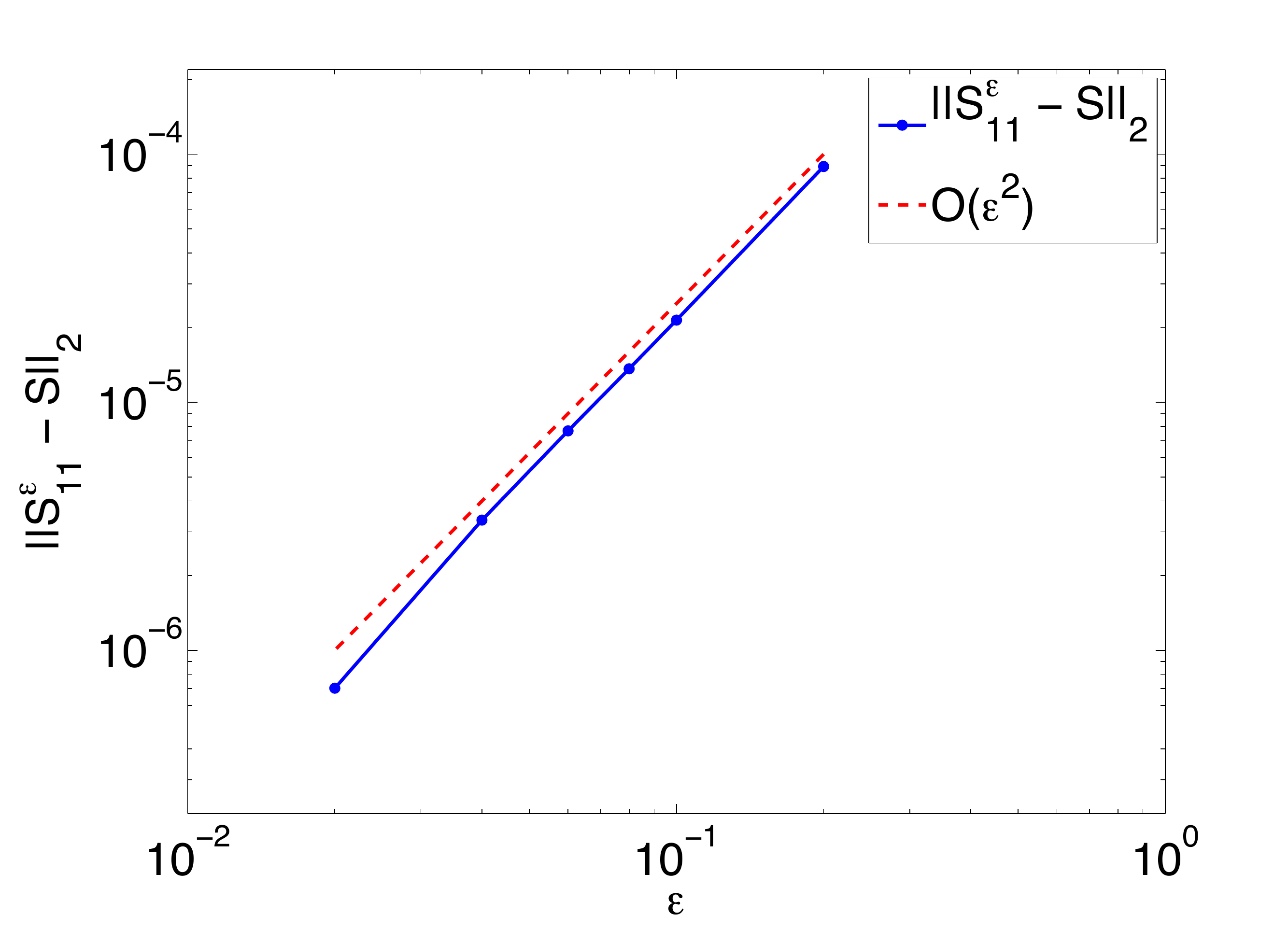}\\
  \includegraphics[width=0.495\textwidth]{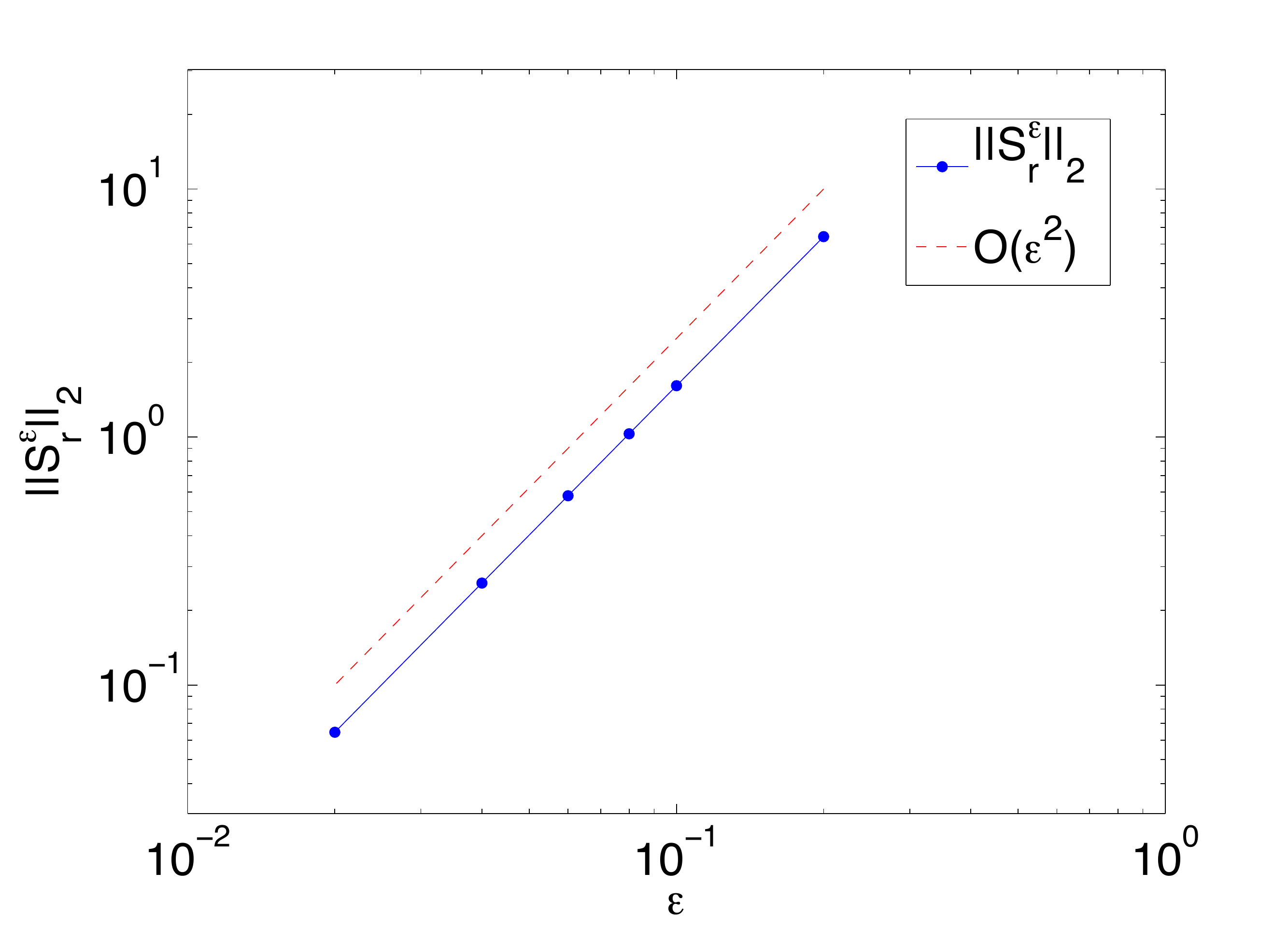}  
     \includegraphics[width=0.495\textwidth]{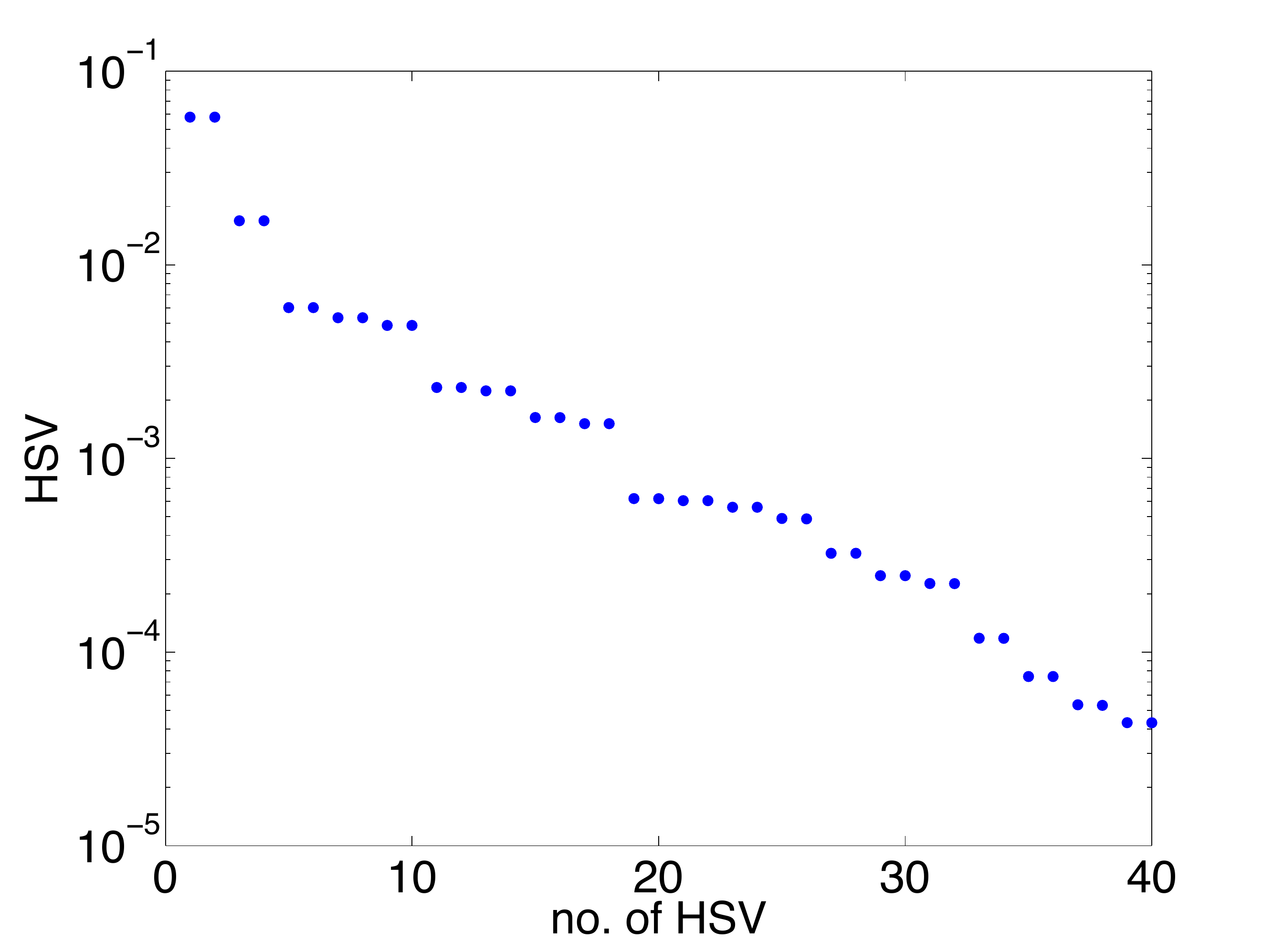}
  \caption{Hankel singular values and quadratic convergence of the matrix $S^{\eps}$ in terms of the $k$ dominant eigenvalues (upper left panel), the 1-1 matrix block (upper right panel) and the residual matrix $S^{\eps}_{r}$ (lower left panel); for smaller values of $\eps$ the numerical solution of the Riccati equation is dominated by round-off errors, hence the results are not shown. The lower right panel shows the first 40 Hankel singular values  (out of 270) when the slow variables are observed; the Hankel singular values are independent of $\eps$. }
   \label{fig:lqr}
\end{center}
\end{figure}


\appendix

\section{Weak convergence under logarithmic transformations}\label{sec:weak}

As we have seen in Section \ref{subsec-examples-homo} loss of backward stability of the model reduction approach 
is related to weak convergence of the multiscale controls. Weak convergence is mainly an issue for homogenization problems with 
periodic coefficients that do not involve any explicit time-dependence. For control problems on a finite time-horizon, a well-known result (e.g., see \cite[Sec.~3]{bensoussan1978} or \cite[Sec.~20]{pavliotis2008}) that is based on the maximum principle states that the convergence of the log-transformed parabolic equation is uniform on bounded time intervals under fairly weak assumptions. 

In the indefinite time-horizon case considered in this paper, however, the lowest order approximation gives only weak convergence. 
In general, weak convergence is not preserved under nonlinear transformation. That is, given a weakly convergent sequence $\psi^{\eps}$ on $\R$ and a nonlinear continuous function $F\colon\R\to\R$, we have  
\[
\psi^{\eps}\wto \psi \quad \not\Rightarrow \quad F(\psi^{\eps})\wto F(\psi)\,.
\]
In our case, however, weak convergence follows from the properties of the logarithm and the fact that $\psi^{\eps}$ is bounded away from 0. 
Let $\psi^{\eps}$ be the solution of the elliptic boundary value problem (\ref{linbvp}) for $T\to\infty$ and recall that 
\begin{equation*}
\psi^{\eps}\to\psi\quad\textrm{strongly in}\quad L^{2}(\bar{O})
\end{equation*}
and 
\begin{equation*}
\psi^{\eps}\wto\psi\quad\textrm{weakly in}\quad H^{1}(\bar{O})\,.
\end{equation*}
Moreover, we have that
\begin{equation*}
0<C\le \psi^{\eps}\le 1 \quad  \eps >0 
\end{equation*}
for some $C\in(0,1)$. 

\begin{lem} We have
\[
\log\psi^{\eps}\to \log\psi \quad\textrm{strongly in}\quad L^{2}(\bar{O})
\]
\end{lem}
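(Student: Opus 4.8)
The plan is to exploit that the logarithm, while nonlinear, is Lipschitz continuous on every interval $[C,1]$ with $C>0$, and that all the functions involved take values in precisely such an interval. First I would record that, since $0<C\le\psi^{\eps}\le 1$ for all $\eps>0$ and $\psi^{\eps}\to\psi$ in $L^{2}(\bar O)$, the limit $\psi$ also satisfies $C\le\psi\le 1$ almost everywhere on $\bar O$: passing to a subsequence along which the convergence holds pointwise a.e.\ preserves the two-sided bound, and since the full sequence converges in $L^{2}$ the bound for $\psi$ is independent of the chosen subsequence.

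Next I would invoke the mean value theorem applied to $t\mapsto\log t$: for a.e.\ $x\in\bar O$ there is a number $\xi(x)$ lying between $\psi^{\eps}(x)$ and $\psi(x)$, hence $\xi(x)\in[C,1]$, with $\log\psi^{\eps}(x)-\log\psi(x)=\xi(x)^{-1}\bigl(\psi^{\eps}(x)-\psi(x)\bigr)$. Because $\xi(x)^{-1}\le C^{-1}$, this yields the pointwise bound $\lvert\log\psi^{\eps}(x)-\log\psi(x)\rvert\le C^{-1}\lvert\psi^{\eps}(x)-\psi(x)\rvert$.

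Finally I would square this inequality, integrate over $\bar O$, and take square roots to obtain
\[
\lVert\log\psi^{\eps}-\log\psi\rVert_{L^{2}(\bar O)}\le C^{-1}\,\lVert\psi^{\eps}-\psi\rVert_{L^{2}(\bar O)},
\]
and the right-hand side tends to $0$ by the assumed strong $L^{2}$ convergence $\psi^{\eps}\to\psi$, which proves the claim. I do not expect a genuine obstacle: the only point needing a word of care is the inheritance of the uniform lower bound $C$ by the limit $\psi$, which is exactly what lets the otherwise nonlinear map $t\mapsto\log t$ act as a globally Lipschitz map on the range of all the $\psi^{\eps}$ and of $\psi$. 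Note that the weak $H^{1}$ convergence is not needed for this particular statement, and that the identical argument works verbatim with any locally Lipschitz $F$ in place of $\log$.
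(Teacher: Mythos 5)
Your proof is correct and follows essentially the same route as the paper: both exploit the uniform two-sided bound $C\le\psi^{\eps}\le 1$ to make $\log$ act as a Lipschitz map on the relevant range, then transfer the strong $L^{2}$ convergence of $\psi^{\eps}$ through the Lipschitz estimate. The only cosmetic differences are that you derive the Lipschitz constant $C^{-1}$ explicitly via the mean value theorem and spell out that $\psi$ inherits the bound $C\le\psi\le1$, whereas the paper cites Lipschitz continuity of $\log$ on $[C,1]$ with an unnamed constant $L$ and leaves the inheritance implicit.
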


\begin{proof}
Since $C\le \psi^{\eps}\le 1$ the monotony of the logarithm entails that 
\[
\log C \le \log\psi^{\eps} \le 0 \,.
\]
Since $\log C>-\infty$ and $O\subset\R^{n}$ is bounded it follows that $\log\psi^{\eps}\in L^{2}(\bar{O})$ and, by the same argument,  $\log\psi\in L^{2}(\bar{O})$. Convergence now follows from the fact that $\log(x)$ is Lipschitz continuous with a Lipschitz constant $L<\infty$ if $x\le C>0$: 
\begin{align*}
\| \log\psi^{\eps} - \log\psi\|^{2}_{L^{2}(\bar{O})} & = \int_{\bar{O}} |\log\psi^{\eps} - \log\psi|^{2} dz\\
& \le  L^{2}\int_{\bar{O}} |\psi^{\eps} - \psi|^{2} dz\,,
\end{align*}
which vanishes in the limit $\eps\to 0$ as $\psi^{\eps}\to\psi$ in $L^{2}(\bar{O})$. 
\end{proof}

This implies strong convergence of the value function. For the optimal control, the above conditions give only weak convergence, which is implied by: 

\begin{lem} We have
\[
\log\psi^{\eps}\wto \log\psi \quad\textrm{weakly in}\quad H^{1}(\bar{O})
\]
\end{lem}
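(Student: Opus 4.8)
The plan is to combine the uniform lower bound $\psi^{\eps}\ge C>0$ with the weak $H^{1}$ bound already available for $\psi^{\eps}$ to show that $\{\log\psi^{\eps}\}$ is bounded in $H^{1}(\bar{O})$, and then to identify the weak limit using the strong $L^{2}$ convergence $\log\psi^{\eps}\to\log\psi$ from the previous lemma together with a routine subsequence argument.

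First I would establish the $H^{1}$ bound. Since $\psi^{\eps}$ is a classical solution of (\ref{linbvp}) that is bounded away from zero, the chain rule holds as an identity of $L^{2}$ functions, $\nabla\log\psi^{\eps}=\nabla\psi^{\eps}/\psi^{\eps}$, and the uniform bound $\psi^{\eps}\ge C$ gives
\[
\|\nabla\log\psi^{\eps}\|_{L^{2}(\bar{O})}\le \frac{1}{C}\,\|\nabla\psi^{\eps}\|_{L^{2}(\bar{O})}.
\]
Because $\psi^{\eps}\wto\psi$ weakly in $H^{1}(\bar{O})$, the sequence $\{\psi^{\eps}\}$ is norm-bounded in $H^{1}(\bar{O})$ by uniform boundedness, so $\|\nabla\psi^{\eps}\|_{L^{2}(\bar{O})}$ is bounded uniformly in $\eps$; combined with $\log C\le\log\psi^{\eps}\le 0$ this shows $\{\log\psi^{\eps}\}$ is bounded in $H^{1}(\bar{O})$.

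Next I would extract and identify the limit. Since $H^{1}(\bar{O})$ is a Hilbert space, every subsequence of $\{\log\psi^{\eps}\}$ has a further subsequence converging weakly in $H^{1}(\bar{O})$ to some $\ell$. By the previous lemma $\log\psi^{\eps}\to\log\psi$ strongly in $L^{2}(\bar{O})$, hence in the sense of distributions, so necessarily $\ell=\log\psi$; in particular $\log\psi\in H^{1}(\bar{O})$ and, along that subsequence, $\nabla\log\psi^{\eps}\wto\nabla\log\psi$ in $L^{2}(\bar{O})$. Since the limit does not depend on the chosen subsequence, the usual subsequence principle for weak convergence yields $\log\psi^{\eps}\wto\log\psi$ in $H^{1}(\bar{O})$ for the full sequence.

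I do not expect a genuine obstacle here: the only point needing care is the justification of $\nabla\log\psi^{\eps}=\nabla\psi^{\eps}/\psi^{\eps}$ as an $L^{2}$ identity, which is immediate from the regularity of $\psi^{\eps}$ together with the strict positivity $\psi^{\eps}\ge C>0$ (so that $t\mapsto\log t$ is $C^{1}$ with bounded derivative on the range of $\psi^{\eps}$). Everything else is the standard ``uniform bound plus identified limit implies weak convergence'' argument.
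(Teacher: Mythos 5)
Your argument is correct and takes a genuinely different route from the paper's. You establish a uniform $H^{1}(\bar{O})$ bound on $\log\psi^{\eps}$ directly from $\psi^{\eps}\ge C>0$ and the $H^{1}$-boundedness of $\psi^{\eps}$ (via uniform boundedness), then invoke weak compactness plus identification of the weak limit through the strong $L^{2}$ convergence proved in the previous lemma, and conclude by the subsequence principle. The paper instead works directly with the gradients: it tests $\nabla\log\psi^{\eps}-\nabla\log\psi=\nabla\psi^{\eps}/\psi^{\eps}-\nabla\psi/\psi$ against an arbitrary $\phi\in L^{2}(\bar{O})$, rewrites the difference over a common denominator bounded below by $C^{2}$, and splits it into two integrals: one handled by the weak $L^{2}$ convergence $\nabla\psi^{\eps}\wto\nabla\psi$ together with $0<\psi\le 1$, and the other by Cauchy--Schwarz combined with the strong $L^{2}$ convergence $\psi^{\eps}\to\psi$ and the fact that $\nabla\psi\in L^{2}(\bar{O})$. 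Your soft-analysis route buys robustness and brevity: it avoids the pointwise algebraic manipulations and sign-sensitive inequalities that the paper's direct decomposition relies on (and which really require absolute values sprinkled throughout), and it yields $\log\psi\in H^{1}(\bar{O})$ as a by-product of the compactness argument rather than as a tacit hypothesis. The paper's computational approach, in exchange, makes explicit which pieces of the convergence (weak gradient convergence versus strong $L^{2}$ convergence) enter where, which can be useful if one later wants quantitative estimates. Both proofs rest on the same two inputs---the uniform lower bound $\psi^{\eps}\ge C$ and the weak-$H^{1}$/strong-$L^{2}$ convergence of $\psi^{\eps}$---so the difference is purely in how the limit of the gradients is extracted.
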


\begin{proof}
It suffices to show that $\nabla\log\psi^{\eps}\wto\nabla\log\psi$ in $L^{2}(\bar{O})$. To this end recall that $\nabla\psi^{\eps}\wto\nabla\psi$ in $L^{2}(\bar{O})$ since $\psi^{\eps}$ converges weakly in $H^{1}(\bar{O})$. Then, for all test functions $\phi\in L^{2}(\bar{O})$, using again that $\psi^{\eps}\ge C>0$ pointwise and uniformly in $\eps$, 
\begin{align*}
\int_{\bar{O}} \left(\frac{\nabla\psi^{\eps}}{\psi^{\eps}} - \frac{\nabla\psi}{\psi}\right)\phi \,dz 
& = \int_{\bar{O}} \left(\psi \nabla\psi^{\eps}- \psi^{\eps}\nabla\psi \right)\frac{\phi}{\psi^{\eps}\psi} \,dz\\
& \le  \frac{1}{C^{2}} \int_{\bar{O}}  \left(\psi \nabla\psi^{\eps}- \psi^{\eps}\nabla\psi \right) \phi  \,dz\\
& \le  \frac{1}{C^{2}} \underbrace{\int_{\bar{O}}  \left(\psi \nabla\psi^{\eps}- \psi\nabla\psi \right) \phi  \,dz}_{I_{1}} + \frac{1}{C^{2}} \underbrace{\int_{\bar{O}}  \left(\psi\nabla\psi- \psi^{\eps}\nabla\psi \right) \phi  \,dz}_{I_{2}}
\end{align*}
We look at the two integrals separately. Using that $0<\psi\le 1$ it follows that 
\[
|I_{1} | \le \left|\int_{\bar{O}}  \left(\nabla\psi^{\eps}- \nabla\psi \right) \phi  \,dz \right| \to 0
\]
since $\phi\in L^{2}(\bar{O})$ and $\nabla\psi^{\eps}\wto\nabla\psi$ weakly in $L^{2}(\bar{O})$. Now for the second integral: since the weakly convergent sequence $\psi^{\eps}$ and its limit $\psi$ are bounded in $H^{1}(\bar{O})$ we conclude that $\nabla\psi\in L^{2}(\bar{O})$, which together with the boundedness of $|\psi^{\eps}-\psi|$ implies that $(\psi^{\eps}-\psi)\nabla\psi\in L^{2}(\bar{O})$. So, by the Cauchy-Schwarz inequality,  
\begin{align*}
|I_{2}|^{2} & \le \left(\int_{\bar{O}}  |(\psi^{\eps}- \psi)\nabla\psi|^{2}  \,dz\right)\left( \int_{\bar{O}}  |\phi|^{2}  \,dz\right)\\
& = \|\phi \|^{2}_{L^{2}(\bar{O})} \int_{\bar{O}}  |(\psi^{\eps}- \psi)\nabla\psi|^{2}  \,dz \\
& \le M\|\phi \|^{2}_{L^{2}(\bar{O})}\int_{\bar{O}}  |(\psi^{\eps} - \psi)\nabla\psi| \,dz\\
\end{align*}
for some constant $0<M<\infty$. Reiterating the preceding argument it follows that  
\[ 
|I_{2}|^{2} \le M\|\phi \|^{2}_{L^{2}(\bar{O})}\|\psi^{\eps}-\psi \|^{2}_{L^{2}(\bar{O})}\|\nabla\psi\|^{2}_{L^{2}(\bar{O})}\to 0
\]
as $\psi^{\eps}\to\psi$ in $L^{2}(\bar{O})$ and $\nabla\psi\in L^{2}(\bar{O})$. Hence 
\[
\left|\int_{\bar{O}} \left(\nabla\log\psi^{\eps} - \nabla\log\psi\right)\phi \,dz \right| \to 0
\]
which, together with the last Lemma yields the assertion.  
\end{proof}


\section{Ergodic control problem}\label{sec:ergode}

We briefly discuss the ergodic control problem of Section \ref{subsec-examples-linear} that is known to be related to an elliptic eigenvalue problem \cite{holland1978,nagai1990,fleming1995}. In principle, the equivalence of (\ref{linEVP}) and (\ref{ergodHJB}) directly follows from the logarithmic transformation. 
Here, we give an alternative derivation of the associated HJB equation, starting from the underlying Kolmogorov backward equation. To this end let
\begin{equation}\label{ergodic}
\eta^{\eps} = -\limsup_{T\to\infty}\frac{1}{\beta T}\log\bE\left(\exp\left(-\beta \int_{0}^{T} G(z^{\eps}_{t})\,dt\right)\right)\,.
\end{equation}
for a continuous bounded function $G\colon\R^{n}\to [0,\infty)$ 
Further let $\varphi(z,t)$ be given by 
\begin{equation}\label{varphi}
\varphi^{\eps}(z,t) = \bE\left(\exp\left(-\beta \int_{0}^{t} G(z^{\eps}_{s})\,ds\right)\bigg|\,z^{\eps}_{0}=z\right).
\end{equation}
By the Feynman-Kac formula $\varphi^{\eps}(z,t)$ is the solution of
\begin{equation}\label{FK3}
\begin{aligned}
\left(\frac{\partial}{\partial t} - \cL^{\eps}\right)\varphi^{\eps} & = -\beta G\varphi^{\eps}\\ \varphi^{\eps}(z,0) & = 1\,.
\end{aligned}
\end{equation}
Here
\[
\cL^{\eps} = \frac{1}{2} \beta^{-1} \sigma(z;\eps)\sigma(z;\eps)\colon\nabla^{2} + b(z;\eps)\cdot\nabla
\]
denotes the infinitesimal generator of our generic uncontrolled diffusion process. Setting $V^{\eps}=-\beta^{-1}\log\varphi^{\eps}$, we can rewrite Equation~\eqref{ergodic} in the form 
\begin{equation*}
\eta^{\eps} = \lim_{t\to\infty}\frac{V^{\eps}(z,t)}{t}\,.
\end{equation*}
Assuming that the limit exists, this motivates the following asymptotic ansatz for large $t$:
\begin{equation*}
\varphi^{\eps}(z,t) \sim \psi^{\eps}(z)\exp(-\eta^{\eps}\beta t)\,,\quad \psi^{\eps}>0\,.
\end{equation*}
Plugging the separation ansatz into (\ref{FK3}) it follows that $\psi^{\eps}$ solves the eigenvalue equation
\[
\left(G - \beta^{-1}\cL^{\eps} \right)\psi^{\eps} = \eta^{\eps} \psi^{\eps}\,,
\]
or, equivalently,
\[
\left(\cL^{\eps} - \beta G \right)\psi^{\eps} = -\beta\eta^{\eps} \psi^{\eps}\,,
\]
As a consequence of the Perron-Frobenius theorem the eigenfunction $\psi^{\eps}$ corresponding to the principal eigenvalue 
$-\beta\eta^{\eps}$ is strictly positive. The equivalent nonlinear eigenvalue problem for the log-transformed eigenfunction $V^{\eps}=-\beta^{-1}\log \psi^{\eps}$ reads    
\[
\cL^{\eps} V^{\eps} - \frac{1}{2}|\sigma^{T}\nabla V^{\eps}|^{2} + G = \eta^{\eps}\,.
\]
which, as before, can be rewritten in the form 
\[
\min_{c\in\R^{n}}\left\{ (\cL^{\eps} V^{\eps} + (\sigma c)\cdot\nabla V^{\eps} + G + \frac{1}{2}|c|^{2} \right\} = \eta^{\eps}\,.
\]
The last equation is recognized as the dynamic programming equation of the ergodic optimal control problem, of which (\ref{LQR1})--(\ref{LQR2}) is a special case: minimize
\[
J^\eps(u) = \limsup_{T\to\infty}\bE\left(\frac{1}{T}\int_{0}^{T}\left(G(z^{\eps}_{s}) + \frac{1}{2}|u_{s}|^{2}\right)ds\right)
\]
subject to 
\[
dz^{\eps,u}_{s} = \left(b(z^{\eps,u}_{s};\eps) + \sigma(z^{\eps,u}_{s};\eps)u^\eps_{s}\right)ds + \sigma(z^{\eps,u}_{s};\eps) \beta^{-1/2} dW_{s}\,. 
\]

\subsection{Homogenized ergodic control problem} 
Let $z=(x,y)$ and consider the expansion $\psi^\epsilon = \psi_0 + \epsilon\psi_1 + \cdots$ and $\eta^\epsilon = \eta_{0} + \epsilon \eta_1 + \cdots$, as in the previous subsections. The leading term in the expansion $\psi_0$ is independent of $y$ and satisfies
\begin{align*}
(\bar{\cL} - \beta \bar{G})\psi_0= -\beta\eta_0\psi_0\,,
\end{align*}
with $\bar{\cL}, \bar{G}$  defined in~\eqref{homo-generator2}. Now suppose $V^\epsilon = V_0 + \epsilon V_1 + \cdots$, then again 
\[
  V_0 = -\beta^{-1}\log \psi_0, \quad V_1 = -\beta\frac{\psi_1}{\psi_0}.
\]
This indicates that the leading nonlinear eigenpair $(\eta_0, V_0)$ satisfies 
\[
\eta_0 = \limsup_{T \to\infty} \bE\left(\frac{1}{T}\int_0^{T} \left(\bar{G}(x_s) + \frac{1}{2}
|\bar{\alpha}(x_{s})^{T}\nabla V_{0}(x_{s})|^2\right) ds\right)\,,
\]
where $x_{s}$ solves the optimally controlled SDE
\[
dx_{s} = \left(\bar{f}(x_{s}) - \bar{\alpha}(x_{s})\bar{\alpha}(x_{s})^{T}\nabla V_{0}(x_{s})\right)ds + \bar{\alpha}(x_{s}) \beta^{-1/2} dw_{s}\,.
\]
By ergodicity of the controlled process, the above expectation is independent of the distribution of the initial values; see \cite{ScWiHa2012} and the references therein.


\section{Entropy bounds for the cost function}\label{sec:bounds}
In this section we study the cost function of the optimal control problem
from the point of view of change of measure.
\label{sec-cost_function}
Consider the SDE
\begin{align}
\begin{split}
  dz_s =&\, b(z_s)\, ds +\beta^{-1/2} \sigma(z_s)\, dw_s  \\
   z_0 =&\, z 
\end{split}
\label{sec-cost-sde1}
\end{align}
and the controlled SDE
\begin{align}
\begin{split}
  dz_s =&\, (b(z_s) + \sigma(z_s)u_{s})\,ds + \beta^{-1/2}\sigma(z_s)\, dw_s \\
   z_0 =&\, z, 
\end{split}\label{sec-cost-csde}
\end{align}
where $u_{s}$ is any bounded measurable control that is adapted to $z_{s}$. Let $\mu$ and $\mu_u$ denote the path
measures generated by (\ref{sec-cost-sde1}) and (\ref{sec-cost-csde}), respectively. 
Then by Girsanov's theorem \cite{oksendal2003}, we have that
\begin{equation}
  \frac{d\mu_u}{d\mu} = \exp\left(-\beta^{1/2}\int_0^\tau u_{s}\, dw_s -
\frac{\beta}{2}\int_0^\tau |u_{s}|^2\, ds\right).\label{sec-cost-girsanov}
\end{equation}
Let a cost functional be given by
\begin{equation}
J(u) = \bE_{\mu_u}\left(\int_0^\tau \left(G(z_s) + \frac{1}{2} |u_{s}|^2
\right) ds ~\bigg|~ z_0 = z\right),
\end{equation}
where $G$ satisfies Assumption 2 from Section~\ref{ssec:logtrafo}. Here we use the notation $\bE_{\mu_u}$ to indicate that the expectation is understood \wrt the probability measure $\mu_u$. Moreover the dependence of $J$ on the initial value $z$ is omitted. 

Let $\hat{u} = \argmin J(u)$, then from Theorem \ref{thm:verification} we know $\hat{u}_{s}$ only depends on $z_{s}$.
Let $\hat{\mu}$ denote the measure $\mu_{\hat{u}}$ for simplicity.
Our purpose here is to estimate $|J(u) - J(\hat{u})|$ when $||u - \hat{u}||_{L^\infty}$ is small. We will make use of the following definition. 

\begin{defn}
For two probability measures $\mu_u,\mu$ with $\mu_{u}\ll\mu$, the Kullback-Leibler
divergence of $\mu_{u}$ relative to $\hat{\mu}$ is defined as 
\begin{equation}
  I(\mu_u\,|\,\hat{\mu}) = \int \log\left(\frac{d\mu_u}{d\hat{\mu}}\right) \,d\mu_u.
\end{equation}
\end{defn}
We also assume that Assumption 3 from Section~\ref{ssec:logtrafo} holds: there exists $\gamma > 0$, such that $
\bE_{\mu}(e^{\gamma \tau}) = C_1 < +\infty$. As in Section \ref{ssec:logtrafo}, we
have that 
\[
  \bE_{\mu}\left(\exp\Big(-\beta\int_0^\tau G(z_s)\, ds\Big)\right) \ge C_1^{-\beta M_1/\gamma} 
\]
Here and in the following, the conditioning on the initial value is omitted.

We also need two technical estimates in order to study the convergence of the cost functional. We start with the following estimate.
\begin{lem}
\label{lemma-0}
$\bE_{\hat{\mu}} (e^{\gamma \tau}) \le C_1^{1+\beta M_1/\gamma}.$
\end{lem}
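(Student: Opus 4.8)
The plan is to realise the optimally controlled path measure $\hat\mu=\mu_{\hat u}$ as an exponential tilt of the uncontrolled measure $\mu$ and then to bound its density. Fix the initial point $z_0=z$ and set $\psi(z)=\bE_{\mu}(\exp(-\beta\int_0^\tau G(z_s)\,ds))$ as in \eqref{phi-exp}; by the logarithmic transformation of Section~\ref{ssec:logtrafo}, $\psi$ is the classical solution of the linear problem \eqref{linbvp}, i.e.\ $\cL\psi=\beta G\psi$ on $O$ with $\psi|_{\partial O}=1$. Since $G\ge0$ we have $0<\psi\le1$, and, as recorded just above the lemma, Assumption~3 together with H\"older's inequality yields the uniform lower bound $\psi(z)\ge C_1^{-\beta M_1/\gamma}$.

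The crucial step will be the identity
\[
\frac{d\hat\mu}{d\mu}\;=\;\frac{1}{\psi(z)}\exp\!\left(-\beta\int_0^\tau G(z_s)\,ds\right).
\]
To establish it I would apply It\^o's formula to $M_s:=\psi(z_s)\exp(-\beta\int_0^s G(z_r)\,dr)$ along the uncontrolled dynamics \eqref{sec-cost-sde1}; the equation $\cL\psi=\beta G\psi$ forces the drift to cancel, so that $dM_s=\beta^{-1/2}\exp(-\beta\int_0^s G\,dr)\,(\sigma^T\nabla\psi)(z_s)\cdot dw_s$ and $M$ is a local martingale. Because $\psi$ is bounded on $\bar O$ and the running exponential is $\le1$, the stopped process $M_{s\wedge\tau}$ is a bounded martingale; optional stopping (with $\tau<\infty$ a.s.\ by Assumption~3) gives $\bE_\mu(M_\tau)=M_0=\psi(z)$, and $\psi(z_\tau)=1$ on $\partial O$ shows that $M_\tau/M_0$ is precisely the density displayed above. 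That the tilted measure $M_\tau/M_0\cdot d\mu$ coincides with the law $\mu_{\hat u}$ of the optimally controlled SDE with $\hat u=-\sigma^T\nabla V$, $V=-\beta^{-1}\log\psi$, then follows by matching the Girsanov exponent of $M_\tau/M_0$ with \eqref{sec-cost-girsanov}; alternatively one may simply invoke the classical duality between exponential expectations and stochastic control, see \cite{DaiPra1996,fleming2006}.

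Granting the identity, the estimate is immediate: since $G\ge0$ the exponential factor is at most $1$, so $d\hat\mu/d\mu\le\psi(z)^{-1}\le C_1^{\beta M_1/\gamma}$ pointwise, and hence
\[
\bE_{\hat\mu}(e^{\gamma\tau})\;=\;\bE_{\mu}\!\left(e^{\gamma\tau}\,\frac{d\hat\mu}{d\mu}\right)\;\le\;C_1^{\beta M_1/\gamma}\,\bE_{\mu}(e^{\gamma\tau})\;=\;C_1^{\beta M_1/\gamma}\cdot C_1\;=\;C_1^{1+\beta M_1/\gamma},
\]
where the final equality is Assumption~3.

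The main obstacle is the middle step, namely proving the density identity rigorously: one must verify that $M$ is a true (not merely local) martingale up to the exit time $\tau$ and that the Girsanov change of measure produces exactly the optimally controlled dynamics rather than some other drift perturbation. Under the standing hypotheses --- boundedness of $\psi$ and $G$, a.s.\ finiteness of $\tau$, and the verification Theorem~\ref{thm:verification} --- this is routine, but it is the only point requiring genuine care; once the identity is available the bound follows in a single line.
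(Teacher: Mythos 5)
Your proof is correct and follows essentially the same route as the paper: both rest on the tilting identity $\tfrac{d\hat\mu}{d\mu}=\psi(z)^{-1}\exp(-\beta\int_0^\tau G\,ds)$, then use $G\ge 0$ to bound the exponential by $1$, the uniform lower bound $\psi\ge C_1^{-\beta M_1/\gamma}$ from Assumption~3, and finally $\bE_\mu(e^{\gamma\tau})=C_1$. The paper asserts the density identity directly from the logarithmic-transformation duality (its display labelled \eqref{jensen-as}), whereas you supply the underlying It\^o/optional-stopping argument for it; this is a useful elaboration but not a different method.
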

\begin{proof}
  we have 
    $\bE_{\hat{\mu}} (e^{\gamma \tau}) = \bE_{\mu}(e^{\gamma \tau}
    \frac{d\hat{\mu}}{d\mu})$. Using the dual relation
    \begin{align*}
      -\beta^{-1}\log \bE_\mu\left(\exp\Big(-\beta\int_0^\tau G(z_s)\, ds\Big)\right) = \inf_u J(u) = J(\hat{u})
    \end{align*}
    and Jensen's inequality, we know that 
    \begin{align}
    \exp\Big(-\beta\int_0^\tau G(z_s)\,ds\Big)\frac{d\mu}{d\hat{\mu}} = \bE_\mu\left(\exp\Big(-\beta\int_0^\tau G(z_s)\, ds\Big)\right) \ge C_1^{-\beta M_1/\gamma}  , \hspace{3mm} \mu-a.s.
    \label{jensen-as}
  \end{align}
  where we have assumed the equivalence of $\mu$ and $\hat{\mu}$. Since $G$ is nonnegative, 
  \begin{align*}
    \bE_{\hat{\mu}} (e^{\gamma \tau}) = \bE_{\mu}\left(e^{\gamma \tau}
    \frac{d\hat{\mu}}{d\mu}\right) \le C_1^{\beta M_1/\gamma} \bE_{\mu}(e^{\gamma
    \tau}) = C_1^{1+\beta M_1/\gamma}.
\end{align*}
\end{proof}

  The following lemma provides us with an estimate on the relative entropy when the control $u$ is close to  $\hat{u}$.
\begin{lem}
\label{lemma-1}
Suppose there is an $\epsilon > 0$, such that $|u_{s} - \hat{u}_{s}| \le \epsilon$,
for all $s>0$, and let $\epsilon < (\gamma/\beta)^{1/2}$. Then 
\[
  I(\mu_u\,|\,\hat{\mu}) \le \beta C_3\epsilon^2\,,\quad \bE_{\mu_u}(\tau) \le 2C_3\,,
\]
with the constant $C_3 = \gamma^{-1} (1 + \beta M_1/\gamma) \log C_1$.
\end{lem}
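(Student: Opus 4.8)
\textbf{Proof proposal for Lemma~\ref{lemma-1}.}

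The plan is to get a closed-form expression for the relative entropy $I(\mu_u\,|\,\hat\mu)$ from Girsanov's theorem and then bound the resulting stochastic integral and running integral using Lemma~\ref{lemma-0} together with the hypothesis $|u_s-\hat u_s|\le\epsilon$. First I would write both $\mu_u$ and $\hat\mu$ as changes of measure relative to the uncontrolled measure $\mu$ via~\eqref{sec-cost-girsanov}, so that
\[
\frac{d\mu_u}{d\hat\mu} = \exp\left(-\beta^{1/2}\int_0^\tau (u_s-\hat u_s)\,dw^{\hat u}_s - \frac{\beta}{2}\int_0^\tau |u_s-\hat u_s|^2\,ds\right),
\]
where $w^{\hat u}$ is the Brownian motion driving the controlled dynamics under $\hat\mu$ (equivalently, one may directly apply Girsanov between the two controlled SDEs). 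Taking $\log$ and then $\bE_{\mu_u}$, the martingale term has zero expectation under $\mu_u$ — here one must check the stochastic integral is a genuine martingale, e.g.\ via a localization/stopping argument using the $L^\infty$-bound on $u-\hat u$ and the finite-exponential-moment bound on $\tau$ — leaving
\[
I(\mu_u\,|\,\hat\mu) = \frac{\beta}{2}\,\bE_{\mu_u}\left(\int_0^\tau |u_s-\hat u_s|^2\,ds\right) \le \frac{\beta\epsilon^2}{2}\,\bE_{\mu_u}(\tau).
\]

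So everything reduces to bounding $\bE_{\mu_u}(\tau)$. The idea is to transfer the finite exponential moment from $\hat\mu$ (given by Lemma~\ref{lemma-0}, $\bE_{\hat\mu}(e^{\gamma\tau})\le C_1^{1+\beta M_1/\gamma}$) to $\mu_u$. Using $\frac{d\mu_u}{d\hat\mu}$ above and Cauchy--Schwarz (or Young's inequality) one writes
\[
\bE_{\mu_u}(\tau) = \bE_{\hat\mu}\!\left(\tau\,\frac{d\mu_u}{d\hat\mu}\right)
\]
and splits the exponent: the quadratic term $-\tfrac\beta2\int_0^\tau|u_s-\hat u_s|^2\,ds \le 0$ is harmless, while the martingale term is controlled by completing the square against part of the exponential moment of $\tau$. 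Concretely, since $|u_s-\hat u_s|\le\epsilon$ with $\beta\epsilon^2<\gamma$, one has $\exp(-\beta^{1/2}\int_0^\tau(u_s-\hat u_s)\,dw_s) \le \exp(\tfrac\beta2\int_0^\tau|u_s-\hat u_s|^2\,ds)\cdot\mathcal E_s$ for an exponential martingale $\mathcal E$, and the leftover deterministic factor is $\le e^{(\beta\epsilon^2/2)\tau}$ with $\beta\epsilon^2/2 < \gamma$; combining with $\bE_{\hat\mu}(e^{\gamma\tau})<\infty$ and the elementary bound $\tau\,e^{(\beta\epsilon^2/2)\tau}\le C\,e^{\gamma\tau}$ yields $\bE_{\mu_u}(\tau)\le 2C_3$ with $C_3=\gamma^{-1}(1+\beta M_1/\gamma)\log C_1$ after tracking constants (using $\tau e^{a\tau}\le \tfrac{1}{\gamma-a}e^{\gamma\tau}$ type estimates and $\log C_1$ absorbing the rest). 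Plugging $\bE_{\mu_u}(\tau)\le 2C_3$ back into the entropy bound gives $I(\mu_u\,|\,\hat\mu)\le \beta C_3\epsilon^2$.

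The main obstacle is the second estimate, $\bE_{\mu_u}(\tau)\le 2C_3$: getting a bound that is \emph{uniform in $u$} (for all admissible $u$ with $|u-\hat u|\le\epsilon$) and \emph{independent of $\epsilon$} requires carefully exploiting $\epsilon<(\gamma/\beta)^{1/2}$ so that the Girsanov martingale's quadratic variation never eats up the full exponential moment $\gamma$ — one needs the leftover rate to stay strictly below $\gamma$, which is exactly what the hypothesis guarantees, and then an explicit constant-chasing to land on precisely $2C_3$. A secondary technical point is justifying that the exponential local martingale appearing in Girsanov's formula is a true martingale on $[0,\tau]$; this follows from Novikov's condition or a standard localization argument since $u-\hat u$ is bounded and $\tau$ has finite exponential moments under both measures, but it should be mentioned. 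The first estimate then follows essentially for free once $\bE_{\mu_u}(\tau)$ is controlled.
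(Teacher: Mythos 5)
Your first step---expressing $I(\mu_u\,|\,\hat\mu)$ via Girsanov and bounding it by $\tfrac{\beta}{2}\epsilon^2\,\bE_{\mu_u}(\tau)$---coincides exactly with the paper's. The gap is in your second step, bounding $\bE_{\mu_u}(\tau)$. The manipulation you sketch is circular: the Radon--Nikodym derivative $\tfrac{d\mu_u}{d\hat\mu}$ \emph{is} the exponential martingale $\mathcal E_\tau = \exp(M_\tau - \tfrac12\langle M\rangle_\tau)$ under $\hat\mu$, so ``dropping the quadratic term and reinserting it to complete the square against an exponential martingale'' just reproduces $\tfrac{d\mu_u}{d\hat\mu}$ multiplied by unity---there is no leftover $e^{(\beta\epsilon^2/2)\tau}$ factor sitting \emph{outside} a martingale that you could integrate away. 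If instead you take the Cauchy--Schwarz/H\"older route honestly, the moment-generating-function computations for $\mathcal E^q$ force you to spend an exponential moment of the form $\bE_{\hat\mu}(e^{c\beta\epsilon^2\tau})$ with $c\ge 2$, requiring $\epsilon^2\lesssim\gamma/(2\beta)$ rather than the stated $\epsilon^2<\gamma/\beta$, and the constants do not land on $2C_3$.

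The ingredient you are missing is the Gibbs/Donsker--Varadhan variational inequality, which the paper derives in one line from Jensen's inequality applied to the concave logarithm: writing $\bE_{\hat\mu}(e^{\gamma\tau}) = \bE_{\mu_u}\bigl(e^{\gamma\tau}\,\tfrac{d\hat\mu}{d\mu_u}\bigr)$ and taking logarithms gives
\[
\log \bE_{\hat{\mu}} (e^{\gamma \tau}) \ge \gamma \bE_{\mu_{u}}(\tau) - I(\mu_u\,|\, \hat{\mu})\,.
\]
Combining this with your entropy bound produces the self-referential estimate $\gamma\bE_{\mu_u}(\tau) \le \tfrac{\beta\epsilon^2}{2}\bE_{\mu_u}(\tau) + (1+\beta M_1/\gamma)\log C_1$, and the hypothesis $\epsilon^2<\gamma/\beta$ is used precisely to make $\gamma - \tfrac{\beta\epsilon^2}{2} > \gamma/2$, so one can divide through and obtain $\bE_{\mu_u}(\tau)\le 2C_3$ exactly. (A Young/Fenchel inequality $\gamma\tau\cdot Z\le e^{\gamma\tau}+Z\log Z-Z$ would also close the loop but yields the weaker constant $C_1^{1+\beta M_1/\gamma}-1$ in place of $(1+\beta M_1/\gamma)\log C_1$; the logarithm from Jensen is what gives the clean constant.) Your remark about verifying that the Girsanov exponential is a true martingale is a valid technical point and is indeed justified by the boundedness of $u-\hat u$ together with Assumption 3.
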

\begin{proof}
From (\ref{sec-cost-girsanov}), we know 
\begin{align*}
  I(\mu_u\,|\, \hat{\mu}) 
  = \int \log\left(\frac{d\mu_u}{d\hat{\mu}}\right) d\mu_u
  = \frac{\beta}{2}~\bE_{\mu_u} \left(\int_0^\tau |u_{s}-\hat{u}_{s}|^2
ds\right) 
\le \frac{\beta}{2} \epsilon^2 \bE_{\mu_u}(\tau).
\end{align*}
On the other hand, by Jensen's inequality, 
\begin{align*}
  \log \bE_{\hat{\mu}} (e^{\gamma \tau}) \ge \gamma \bE_{\mu_{u}}(\tau) -
  I(\mu_u\,|\, \hat{u}).
\end{align*}
The conclusion follows from the last two inequalities.
\end{proof}

Now we are ready to prove Theorem~\ref{thm:entropyIndefinite}, which is restated here more precisely.
\begin{thm}
  Let Assumption 1,2 and 3 from Section \ref{ssec:logtrafo} hold.  
  Further suppose that $\eps < (\gamma/\beta)^{1/2}$ and
  $|u_{s}-\hat{u}_{s}| \le \epsilon$, for all $s>0$. Then it holds that 
\begin{equation}
  J(u) = J(\hat{u}) + \beta^{-1}I(\mu_u|\, \hat{\mu}) \le J(\hat{u}) + C_3 \epsilon^2.
\end{equation}
\end{thm}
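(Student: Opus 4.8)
The plan is to first prove the exact identity $J(u) = J(\hat{u}) + \beta^{-1}I(\mu_u\,|\,\hat{\mu})$ and then read off the quantitative bound from Lemma~\ref{lemma-1}. The starting point is Girsanov's theorem~\eqref{sec-cost-girsanov}. A standard computation with the density~\eqref{sec-cost-girsanov}, carried out exactly as in the proof of Lemma~\ref{lemma-1} but with the reference control $0$ in place of $\hat{u}$ (and using that the exponential in~\eqref{sec-cost-girsanov} is a genuine martingale, which is legitimate because $\hat{u}=-\sigma^T\nabla V$ is bounded on $\bar O$ and $|u-\hat{u}|\le\epsilon$, so Novikov's condition holds), yields
\[
I(\mu_u\,|\,\mu) = \frac{\beta}{2}\,\bE_{\mu_u}\!\left(\int_0^\tau |u_s|^2\,ds\right).
\]
Substituting this into the definition of $J(u)$ gives $J(u) = \bE_{\mu_u}\!\big(\int_0^\tau G(z_s)\,ds\big) + \beta^{-1}I(\mu_u\,|\,\mu)$, expressing the cost as a free-energy-type functional evaluated along the law of the controlled process.

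Next I would bring in the optimal measure $\hat{\mu}$. By the logarithmic transformation of Section~\ref{ssec:logtrafo}, $\psi(z)=e^{-\beta V(z)}$ solves the linear problem~\eqref{linbvp}, hence $s\mapsto\psi(z_s)\exp(-\beta\int_0^s G(z_r)\,dr)$ is a bounded $\mu$-martingale; optional stopping at $\tau$ produces the $\mu$-a.s.\ identity
\[
\frac{d\hat{\mu}}{d\mu} = \exp\!\left(\beta J(\hat{u}) - \beta\int_0^\tau G(z_s)\,ds\right),
\]
which is exactly~\eqref{jensen-as}, and in particular $\mu$, $\hat{\mu}$ and $\mu_u$ are mutually equivalent. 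The chain rule $\log\frac{d\mu_u}{d\mu} = \log\frac{d\mu_u}{d\hat{\mu}} + \log\frac{d\hat{\mu}}{d\mu}$, integrated against $\mu_u$, then gives
\[
I(\mu_u\,|\,\mu) = I(\mu_u\,|\,\hat{\mu}) + \beta J(\hat{u}) - \beta\,\bE_{\mu_u}\!\left(\int_0^\tau G(z_s)\,ds\right).
\]
Inserting this into the expression for $J(u)$ from the first step, the two $\bE_{\mu_u}\!\big(\int_0^\tau G\,ds\big)$ terms cancel, leaving precisely $J(u) = J(\hat{u}) + \beta^{-1}I(\mu_u\,|\,\hat{\mu})$.

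To finish, note that the hypotheses $\epsilon<(\gamma/\beta)^{1/2}$ and $|u_s-\hat{u}_s|\le\epsilon$ for all $s$ are exactly those of Lemma~\ref{lemma-1}, which yields $I(\mu_u\,|\,\hat{\mu})\le\beta C_3\epsilon^2$; combined with the identity just established this gives $J(u)\le J(\hat{u})+C_3\epsilon^2$. I expect the main obstacle to be the a.s.\ identity for $d\hat{\mu}/d\mu$ and the mutual equivalence of the three measures in the second step: this relies on the uniform integrability of the martingale $\psi(z_s)\exp(-\beta\int_0^s G\,dr)$ up to the \emph{random} exit time $\tau$, which is precisely where Assumption~3 (the uniform exponential moment bound on $\tau$) and the two-sided bound $0<C_2\le\psi\le1$ on the log-transformed value function enter; the remaining steps are routine bookkeeping with Girsanov's theorem and the chain rule for relative entropy.
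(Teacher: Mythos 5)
Your proof is correct and uses the same ingredients as the paper: Girsanov's theorem~\eqref{sec-cost-girsanov}, the a.s.\ identity~\eqref{jensen-as} for $d\hat\mu/d\mu$, and Lemma~\ref{lemma-1}. The only stylistic difference is that you organize the algebra via the chain rule for relative entropy, $I(\mu_u\,|\,\mu)=I(\mu_u\,|\,\hat\mu)+\bE_{\mu_u}[\log(d\hat\mu/d\mu)]$, whereas the paper substitutes~\eqref{jensen-as} directly into the $\hat\mu$-expectation; the two are equivalent rearrangements of the same computation.
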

\begin{proof}
  \begin{align*}
    J(u) = \bE_{\hat{\mu}} \left\{\Big[\int_0^\tau \Big(G(z_s) + \frac{1}{2} |u_{s}|^2\Big)\,ds\Big]\frac{d\mu_u}{d\hat{\mu}}\right\}  \\
\end{align*}
It follows from (\ref{jensen-as}) that we can write the above as 
\begin{align}
  J(u) = J(\hat{u}) + \bE_{\hat{\mu}}\Big[\Big(\beta^{-1}\log
    \frac{d\mu}{d\hat{\mu}} + \int_0^\tau \frac{1}{2} |u_{s}|^2\,ds\Big)\frac{d\mu_u}{d\hat{\mu}}\Big]
\end{align}
Combining this with (\ref{sec-cost-girsanov}), we get 
\begin{align*}
J(u) = J(\hat{u}) + \beta^{-1}I(\mu_u|\, \hat{\mu}) \,.
\end{align*}
The conclusion now readily follows from Lemma \ref{lemma-1}.
\end{proof}

\bibliographystyle{siam}

\end{document}